\documentclass[11pt]{article}
\usepackage[a4paper,margin=28mm]{geometry}
\usepackage{amsmath,amssymb,amsthm,booktabs,array}
\usepackage[T1]{fontenc}
\usepackage{lmodern}
\usepackage{microtype}
\usepackage[hidelinks]{hyperref}
\newtheorem{theorem}{Theorem}
\newtheorem{lemma}[theorem]{Lemma}
\newtheorem{proposition}[theorem]{Proposition}
\theoremstyle{remark}\newtheorem{remark}[theorem]{Remark}
\newcommand{\R}{\mathbb R}
\newcommand{\tr}{\operatorname{tr}}

\newcommand{\cB}{\mathcal B}

\title{Nonexistence of strongly regular graphs via\\ multipoint spherical semidefinite bounds}
\author{Zhen-Qi Liao$^{1}$,\quad Wei-Hsuan Yu$^{1}$\\[6pt]
\small $^{1}$Department of Mathematics, National Central University, Taiwan\\[4pt]
\small \href{mailto:l0911487082@gmail.com}{l0911487082@gmail.com},\quad
\href{mailto:whyu@math.ncu.edu.tw}{whyu@math.ncu.edu.tw}}
\hypersetup{
 pdftitle={Nonexistence of strongly regular graphs via multipoint spherical semidefinite bounds},
 pdfauthor={Zhen-Qi Liao, Wei-Hsuan Yu}}
\date{}
\begin{document}
\maketitle
\begin{abstract}
We use multipoint semidefinite programming to prove the nonexistence
of strongly regular graphs. A normalized
eigenspace projection of a primitive strongly regular graph gives
a spherical two-distance set with one point for each vertex.
A bound smaller than the required number of points therefore rules
out the graph. Following the formulations of de Laat et al.
and Kao and Yu, we use, for each reference type, coefficient matrices
indexed by the feasible inner-product labels of the selected points.
In degree zero, we combine the matrix coordinates indexed by the
reference points into a single coordinate. The resulting programs use configurations
of up to six points, with matrix orders at most $2^m+1$ for a reference
set of $m$ points. Feasible dual solutions, verified in exact rational
arithmetic, exclude strongly regular graphs with parameters
$(351,140,73,44)$, $(550,162,75,36)$, $(703,182,81,35)$ and
$(1344,221,88,26)$.
\end{abstract}

\section{Introduction}
A strongly regular graph with parameters $(v,k,\lambda,\mu)$ is a simple
$k$-regular graph on $v$ vertices in which adjacent vertices have
$\lambda$ common neighbors and distinct nonadjacent vertices have
$\mu$ common neighbors. We ask whether a graph with prescribed
parameters can exist. Brouwer's online tables~\cite{BrouwerTable}
record candidate parameters, their spectra, and existence information.

Our starting point is the spherical embedding of a strongly regular
graph, also called its Euclidean representation
\cite[Section~1.3.5]{BVM}. For a primitive graph, meaning that
both the graph and its complement are connected, project the standard
basis vectors of $\mathbb R^v$ onto a nonprincipal eigenspace and
normalize the resulting vectors.
The resulting vectors lie on $S^{n-1}$, where $n$ is the eigenspace
dimension. Adjacent and nonadjacent vertices have inner products $b$
and $a$, respectively. If $r$ is the eigenvalue of this eigenspace,
then
\[
 b=\frac{r}{k},\qquad a=-\frac{r+1}{v-k-1}.
\]
Section~\ref{sec:spherical-reduction} derives these formulas.
These values are less than $1$, so distinct vertices give distinct
vectors. Thus the graph would give a spherical two-distance set of
exactly $v$ points. We use this implication as a necessary condition
for graph existence. To rule out the graph, it is enough to bound the
size of every spherical $\{a,b\}$-code in this dimension by less than $v$.
Here such a code is a finite set of unit vectors whose distinct
points have inner products in $\{a,b\}$; either or both values may occur.
The bound requires no graph regularity or prescribed common-neighbor counts.

Delsarte, Goethals and Seidel~\cite{DGS} obtain linear programming
(LP) bounds for spherical codes from inequalities involving Gegenbauer polynomials.
For the pair counts fixed by this spherical embedding, the degree-three
inequality is equivalent to the corresponding Krein condition.
The two nonprincipal eigenspaces give the two Krein conditions;
the calculation is included in Section~\ref{sec:spherical-reduction}.
The general absolute bound for a spherical two-distance set in
$\R^n$ is $n(n+3)/2$~\cite[Theorem~4.8 and Example~4.10]{DGS}.
If $a+b\ge0$, Musin's bound improves this to $n(n+1)/2$
\cite[Theorem~1]{MusinTwo}.

On the snapshot of Brouwer's tables accessed on 28 September 2026,
these classical spherical conditions exclude $286$ of the $466$
parameter entries marked as nonexistent with at most $1300$ vertices,
or $61.4\%$, counting a graph and its complement separately.
For this count, both nonprincipal eigenspaces are tested. A row is
excluded if either gives a negative cubic Gegenbauer sum, or if $v$
exceeds the absolute bound, or Musin's bound when $a+b\ge0$.
The snapshot and exact calculation are retained. This motivates
seeking stronger spherical bounds for the remaining cases.

For example, $(550,162,75,36)$ would give $n=33$ and
$\{a,b\}=\{-1/9,7/27\}$. Both $33(33+3)/2=594$ and
$33(33+1)/2=561$ exceed the required $550$ points.
Multipoint semidefinite programming (SDP) bounds give the following result.
\begin{theorem}\label{thm:main}
There is no strongly regular graph with any of the following parameters:
\[
\begin{gathered}
(351,140,73,44),\qquad (550,162,75,36),\\
(703,182,81,35),\qquad (1344,221,88,26).
\end{gathered}
\]
Their complementary parameter sets are also impossible.
\end{theorem}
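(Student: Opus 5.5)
The plan is to argue by contradiction, using the spherical reduction of Section~\ref{sec:spherical-reduction}. Suppose a graph with one of the four parameter sets exists. Each of these graphs is primitive, which we check directly from the parameters: $0<\mu<k<v-1$. We compute the two nonprincipal eigenvalues $r,s$ and their multiplicities from the standard formulas. For one chosen eigenspace of dimension $n$, the normalized projections give $v$ distinct unit vectors in $S^{n-1}$. Distinct vectors have inner products in $\{a,b\}$ with $b=r/k$ and $a=-(r+1)/(v-k-1)$. For $(550,162,75,36)$ this is $n=33$ and $\{a,b\}=\{-1/9,7/27\}$; the other three cases are tabulated the same way.

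It then suffices to show, for each case, that every spherical $\{a,b\}$-code in $S^{n-1}$ has fewer than $v$ points. The bound comes from a multipoint SDP of de Laat et al.\ / Kao--Yu type. For reference sets of $m\le 4$ points, the reference point configurations are finite, because all pairwise inner products lie in $\{a,b\}$. For each feasible reference type we use the stabilizer-invariant positive kernels. These are expanded in Gegenbauer-type functions of the free points' inner products with the reference points. The coefficient matrices are indexed by feasible inner-product labels, so their orders are at most $2^m+1$, with the degree-zero reference coordinates merged into one. The primal maximizes $|C|$ subject to the constraints obtained by summing the kernels over all tuples of the code; these constraints cover configurations of up to six points. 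We then produce an explicit dual: PSD matrices $F_{k}^{\tau}$ for each type $\tau$ and degree $k$, together with nonnegative multipliers. The required conditions are the following. The resulting polynomial in the inner products must be nonpositive on every admissible configuration of up to six points with inner products in $\{a,b\}$, which is a finite check. The objective value must be some $B<v$. Weak duality then gives $|C|\le B<v$, a contradiction.

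To make the certificate rigorous, we solve the SDP numerically in high precision and round to rationals. We then repair positive semidefiniteness, for example by shifting by a small multiple of the identity and absorbing the resulting error into the slack. Exact PSD-ness is checked via rational $LDL^{T}$ factorization, and the finite list of configuration inequalities and the final bound are verified in exact rational arithmetic. The complementary parameter sets follow at once. A graph is strongly regular iff its complement is, and the complement of a graph with the complementary parameters would be a graph with the original parameters; hence nonexistence transfers.

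The main obstacle is the choice of the SDP size together with the exact rounding. Three- and four-point bounds may not go below $v$, which is why configurations of up to six points are needed. The numerical optimum may then be only slightly below $v$, so the margin available for rounding is small. I would first try to choose the eigenspace, the maximal polynomial degree, and $m$ so that the numerical bound is clearly below $v$, and only then do the rational rounding.
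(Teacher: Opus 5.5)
Your proposal is correct and follows the paper's proof essentially step for step: the eigenspace embedding of the primitive graph as a $v$-point spherical $\{a,b\}$-code in $S^{n-1}$, a six-point SDP with label-indexed blocks whose degree-zero reference coordinates are merged (orders at most $2^m+1$), a high-precision numerical solve converted to rationals with identity shifts and checked by exact $LDL^T$ factorizations and exact residuals, and the complement argument. The points you leave implicit, which the paper verifies explicitly, are that all reference Gram matrices with $m\le4$ are positive definite (so the Musin projection formulas apply), that the list of realizable Gram types is complete (it includes one singular rank-five six-point type), and that the label-indexed and merged degree-zero tables lift to genuine PSD kernels on the whole sphere.
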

These four parameter sets occur among the twelve cases listed as
unresolved in Koolen and Gebremichel~\cite[Table~1]{KG}.
We treat all twelve cases at levels $k=2,3,4,5,6$, with Gegenbauer
degrees $0,\ldots,5$ throughout.

We combine two multipoint SDP formulations. De Laat,
Caluza Machado, de Oliveira Filho and Vallentin~\cite{dL} give a
weakened Lasserre hierarchy whose dual collects each term according
to the union of its reference points and two selected points.
This supplies one rule for assembling the inequalities at every level.
Their polynomial formulation uses matrices indexed by monomials,
whose number grows with the coefficient degree. Kao and Yu
\cite[Theorem~2.13 and Corollary~2.14]{KY} use matrix constraints
indexed by inner-product values. For two-distance sets, their
degree-zero matrices have order at most $2^m+1$ and positive-degree
matrices have order at most $2^m$. Both works cover finite-distance
sets; their principal numerical applications concern equiangular lines.

We use de Laat et al.'s weakened dual and derive its matrix form from
Musin's positive semidefinite functions~\cite{Musin}.
First, we express the sphere coordinates in Musin's variables $u,v,t$:
the inner products of the two selected points with the reference
points, and with each other. For each reference type, we then store
the coefficient values in positive semidefinite (PSD) matrices indexed
by the allowed inner-product labels. Polynomial interpolation extends
these choices to functions with the required positivity. In degree
zero, the union inequalities use only sums over matrix coordinates
indexed by reference points. Combining those coordinates reduces
the matrix order from at most $2^m+m$ to $2^m+1$ for $m\ge1$,
without changing the feasible dual objective values.
These orders agree with those in Kao--Yu. The largest block needed
at six points has order $17$.

The counting bound applies to every finite $\Lambda\subset[-1,1)$.
For the coefficient-value construction, we assume $2\le k\le n$ and
that every reference configuration with at most $k-2$ points is
linearly independent. All twelve applications satisfy these assumptions.
The construction takes $(n,\Lambda,k,d)$ as input and produces fixed
coefficient matrices and one SDP whose unknowns are PSD matrices and
a cardinality bound.

The coordinate change describes the same functions using $t,u,v$.
Fixing a largest Gegenbauer degree $d$ restricts the search and may
weaken the bound. Within this polynomial class, with no degree limit
on the coefficient polynomials in $u,v$, the label-indexed matrices
describe exactly the coefficient values needed by the code constraints. The subsequent
degree-zero compression preserves the union inequalities and their
feasible objective values, but need not preserve individual function
values. Thus the construction gives an explicit way to assemble
de Laat et al.'s inequalities using the matrix sizes of Kao--Yu,
together with interpolation and lifting proofs and exact certificates
for the graph exclusions. A verified feasible solution proves the
bound; it need not be optimal.

Section~\ref{sec:coordinates} introduces the weakened dual as a
cardinality bound and derives its expression in the inner-product
variables $u,v,t$. Section~\ref{sec:framework} performs three reductions: it replaces the
coefficient functions by their values on the feasible labels, removes
reference-order redundancy, and compresses the unused degree-zero
coordinates before stating the final SDP. Section~\ref{sec:application}
applies the formulation to strongly regular graphs, first deriving the
spherical embedding and then checking and solving the twelve models used
here. Section~\ref{sec:workflow} describes the exact certificate test,
the exact model construction, the numerical search, and independent
cross-checks. Appendix~\ref{app:reproduce} gives the public repository
for the programs and the four certificates proving Theorem~\ref{thm:main}.

Throughout, $\Lambda$ is the set of allowed inner products, $k$ is
the multipoint level, and $d$ is the largest Gegenbauer degree retained.
Thus $d$ bounds the degree in $t$, not the coefficient degrees in $u,v$.
Appendix~\ref{app:degree} uses $\delta$ for the monomial degree parameter.
In graph parameter tuples and the spectral calculation, $k$ instead
denotes valency; its meaning is specified where it changes.

\section{Preliminaries on semidefinite bounds for spherical codes}\label{sec:coordinates}
This section recalls the multipoint dual bound of de Laat
et al.~\cite{dL} and Musin's construction of positive semidefinite
functions~\cite{Musin}. We express these ingredients in inner-product
variables and fix the notation used in Section~\ref{sec:framework}.

We first state the cardinality bound, use orthogonal symmetry to choose
one function for each reference type, and express these functions in
inner-product variables. We then construct polynomial candidates using
Musin's polynomials and impose the remaining reference symmetries.
The coordinate change describes the same functions; the later
polynomial restriction may weaken the best obtainable bound.

\subsection{A multipoint bound for spherical codes}
Let $\Lambda\subset[-1,1)$ be finite and nonempty, and fix an integer
$k\ge2$. The value of $k$ specifies how many points may occur in one
constraint. Let $V=S^{n-1}\subset\mathbb R^n$. A spherical
$\Lambda$-code is a finite subset of $V$ whose distinct points have
inner products in $\Lambda$. For $j\ge0$, let $I_j$ be the collection
of such codes of size at most $j$, including the empty set, and write
$I_{=j}$ for those of size exactly $j$. A member of $I_j$ will also be
called a configuration.

We seek a real-valued continuous function
$T:V^2\times I_{k-2}\to\mathbb R$, symmetric in its two sphere
arguments: $T(x,y,Q)=T(y,x,Q)$. The meaning of continuity in the
reference configuration $Q$ is specified below. For every fixed
$Q\in I_{k-2}$, we require the following positivity property:
\[
 \bigl(T(x_i,x_j,Q)\bigr)_{i,j=1}^N\succeq0
 \quad\text{for every finite list }x_1,\ldots,x_N\in S^{n-1}.
\]
For a real symmetric matrix $M$, the notation $M\succeq0$ means
$c^TMc\ge0$ for all real vectors $c$;
$M\succ0$ means that the inequality is strict for $c\ne0$.
We call a function with this property positive semidefinite (PSD),
using the positivity convention of~\cite{dL}.
The property is required for all sphere points, even when their
mutual inner products do not belong to $\Lambda$.

The PSD property gives a nonnegative sum for each fixed reference
configuration: if $C$ is a code, then $\sum_{x,y\in C}T(x,y,Q)$ is
the quadratic form of a PSD matrix at the all-ones vector.
We sum these expressions over all reference subsets $Q\subseteq C$
with $|Q|\le k-2$ and bound the result from above.
Each term uses at most $k$ points, namely the points of
$S=Q\cup\{x,y\}$. Grouping the terms by this union leads to
\begin{equation}\label{eq:B}
 \cB_kT(S)=
 \sum_{\substack{Q\subseteq S\\|Q|\le k-2}}
 \ \sum_{\substack{(x,y)\in S^2\\Q\cup\{x,y\}=S}}T(x,y,Q)
 \qquad(S\in I_k\setminus\{\varnothing\}).
\end{equation}
Thus $\cB_k$ collects all terms with the same union $S$.
Pairs are ordered, and the selected points may coincide or lie in $Q$.
For example, if $k=3$ and $S=\{x,y\}$ with $x\ne y$, symmetry in
$x,y$ gives
\[
\begin{aligned}
 \cB_3T(\{x,y\})={}&2T(x,y,\varnothing)\\
 &+T(y,y,\{x\})+2T(x,y,\{x\})\\
 &+T(x,x,\{y\})+2T(x,y,\{y\}).
\end{aligned}
\]
The three lines correspond to $Q=\varnothing,\{x\},\{y\}$.

For a proposed bound $\alpha=1+\lambda$, we require, for every
nonempty $S\in I_k$,
\[
 \cB_kT(S)\le
 \begin{cases}
 \lambda,&|S|=1,\\
 -2,&|S|=2,\\
 0,&3\le |S|\le k.
 \end{cases}
\]
To relate this problem to the formulation of de Laat et al., consider the graph
$\mathcal G_\Lambda$ on $V$ whose edges join distinct points with
inner product outside $\Lambda$. Its independent sets are exactly our codes.
In the notation of de Laat et al.~\cite[equation~(9)]{dL}, the
weakened dual is
\begin{equation}\label{eq:delaat4}
 \Delta_k(\mathcal G_\Lambda)^*
 =\inf\left\{1+\lambda:
 \begin{array}{l}
 \lambda\in\mathbb R,\quad
 T\in C(V^2\times I_{k-2})_{\succeq0},\\
 \cB_kT\le\lambda\chi_{I_{=1}}-2\chi_{I_{=2}}
 \end{array}\right\}.
\end{equation}
Here $\chi$ denotes an indicator function, and
$C(V^2\times I_{k-2})_{\succeq0}$ means the continuous symmetric
functions with the PSD property just stated.
Continuity in the reference configuration is understood as follows:
configurations converge when their sizes are eventually equal and
their points can be ordered to converge coordinatewise. The empty
configuration is isolated. The next construction respects this continuity.

\begin{proposition}\label{prop:certificate}
Every feasible pair $(\lambda,T)$ in~\eqref{eq:delaat4} gives
$|C|\le1+\lambda$ for every spherical $\Lambda$-code $C$.
Equivalently, with $\alpha=1+\lambda$, it gives $|C|\le\alpha$.
\end{proposition}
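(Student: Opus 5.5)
Let $C$ be a spherical $\Lambda$-code and $N=|C|$; we may assume $N\ge1$. The plan is a double-counting argument with the quantity
\[
 \Sigma=\sum_{\substack{Q\subseteq C\\|Q|\le k-2}}\ \sum_{x,y\in C}T(x,y,Q).
\]
We bound $\Sigma$ from below using the PSD property and from above using the feasibility constraints on $\cB_k$.

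\emph{Lower bound.} Every subset $Q\subseteq C$ with $|Q|\le k-2$ is itself a $\Lambda$-code of size at most $k-2$, so $Q\in I_{k-2}$. For such a fixed $Q$, the matrix $\bigl(T(x,y,Q)\bigr)_{x,y\in C}$ is PSD by assumption. Its inner sum is the quadratic form of this matrix at the all-ones vector, hence nonnegative. Therefore $\Sigma\ge0$. The PSD requirement is imposed for all finite lists of sphere points, so no extra hypothesis is needed here.

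\emph{Regrouping.} For each term indexed by $(Q,(x,y))$, put $S=Q\cup\{x,y\}$. Then $S\subseteq C$, $S$ is nonempty, and $|S|\le k$, so $S\in I_k\setminus\{\varnothing\}$. Conversely, for a fixed nonempty $S\subseteq C$, the terms with union $S$ are exactly those with $Q\subseteq S$, $|Q|\le k-2$ and $(x,y)\in S^2$ with $Q\cup\{x,y\}=S$. This is precisely the index set in \eqref{eq:B}. The correspondence is a bijection between the index pairs $(Q,(x,y))$ and the pairs consisting of $S$ together with an index of $\cB_kT(S)$. Hence
\[
 \Sigma=\sum_{\substack{S\subseteq C\\1\le|S|\le k}}\cB_kT(S).
\]
This bookkeeping step is the only place requiring care. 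One must check that coincident selected points, selected points lying in $Q$, and ordered pairs are all counted exactly once on both sides. Subsets $S$ with $|S|>k$ never arise, because $|S|\le|Q|+2\le k$.

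\emph{Upper bound and conclusion.} Apply the constraint $\cB_kT\le\lambda\chi_{I_{=1}}-2\chi_{I_{=2}}$ to each $S$ in the regrouped sum:
\begin{itemize}
 \item the $N$ singletons contribute at most $\lambda N$;
 \item the $\binom N2$ pairs contribute at most $-2\binom N2=-N(N-1)$;
 \item larger subsets contribute at most $0$.
\end{itemize}
Combining with the lower bound gives
\[
 0\le\Sigma\le\lambda N-N(N-1)=N(\lambda+1-N).
\]
Since $N\ge1$, dividing by $N$ yields $N\le1+\lambda=\alpha$. Continuity of $T$ plays no role in this argument; it only matters for the optimization problem \eqref{eq:delaat4} being well posed.
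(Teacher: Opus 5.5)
Your proof is correct and is essentially the paper's argument: sum the all-ones quadratic forms over the reference subsets $Q\subseteq C$, regroup the terms by the union $S=Q\cup\{x,y\}$, and apply the constraints to get $0\le N(1+\lambda-N)$. The only point you skip is the empty code, where ``we may assume $N\ge1$'' silently needs $\lambda\ge-1$. The paper supplies this by noting that on a singleton $\cB_kT$ is a sum of diagonal entries $T(x,x,Q)\ge0$, so feasibility forces $\lambda\ge0$; equivalently, apply your argument to any one-point code.
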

\begin{proof}
Let $N=|C|>0$. Sum the nonnegative quadratic forms just described,
and then group the terms by their unions:
\begin{align*}
 0&\le\sum_{\substack{Q\subseteq C\\|Q|\le k-2}}
             \sum_{x,y\in C}T(x,y,Q)\\
  &=\sum_{\substack{\varnothing\ne S\subseteq C\\|S|\le k}}
             \cB_kT(S)\\
  &\le N\lambda-2\binom N2=N(1+\lambda-N).
\end{align*}
Every triple $(Q,x,y)$ occurs once, in the term for
$S=Q\cup\{x,y\}$. There are $N$ singleton subsets and $\binom N2$
two-point subsets; all larger subsets contribute at most zero.
Division by $N$ proves the claim. On a singleton, $\cB_kT$ is a sum
of diagonal entries $T(x,x,Q)\ge0$, so feasibility also implies
$\lambda\ge0$; hence the empty code satisfies the bound.
\end{proof}

For a nonexistence proof, a feasible function suffices; neither
optimality nor attainment of the infimum is required. We next construct
functions with the required positivity and symmetry.

\subsection{One function for each reference type}
At first, $T$ appears to require a separate function for every reference
configuration $Q$. Orthogonal symmetry reduces this choice to one
function for each reference shape.
Orthogonal transformations preserve the allowed inner products and
the union condition in~\eqref{eq:B}. By the symmetry reduction of
de Laat et al.~\cite[Section~3]{dL}, restricting the dual to functions
satisfying
\begin{equation}\label{eq:global-invariance}
T(gx,gy,gQ)=T(x,y,Q)\qquad(g\in O(n))
\end{equation}
does not change its feasible objective values. Here $O(n)$ denotes the orthogonal
transformations of $\mathbb R^n$.

Two reference sets have the same \emph{reference type}, or shape,
if an orthogonal transformation carries one to the other.
We can test this using Gram matrices.
If two ordered lists
$(a_i)$ and $(b_i)$ have the same Gram matrix, then
$\sum_i c_i a_i\mapsto\sum_i c_i b_i$ is well-defined and preserves
inner products. Indeed, the squared norm of either linear combination
is the same quadratic form in their common Gram matrix. Extending
orthonormal bases of the two spans gives an orthogonal transformation
carrying each $a_i$ to $b_i$.
Consequently, an unordered reference configuration is determined up to
an orthogonal transformation by its Gram matrix up to simultaneous
row and column permutation. We refer to a Gram matrix up to these permutations as a \emph{Gram type}.
For $m$ reference points the diagonal entries are $1$ and the
other entries belong to the finite set $\Lambda$, so only finitely
many types occur. A matrix is realizable in $\mathbb R^n$ precisely
when it is PSD and has rank at most $n$. The condition
$1\notin\Lambda$ ensures that its distinct indices represent distinct points.

Choose one concrete reference configuration $R\subset S^{n-1}$
from each type with at most $k-2$ points, and let
$\mathcal R_{k-2}\subseteq I_{k-2}$ be this finite collection.
For each $R$, put $m=|R|$, fix an ordering $r_1,\ldots,r_m$ of its
points, and write $G_R=(r_i\cdot r_j)_{i,j=1}^m$ for its Gram matrix.
Thus $R$ is a set of sphere points, while $G_R$ is the matrix for
its chosen order. It is enough to store the functions
\[
 K_R(x,y)=T(x,y,R).
\]
For another configuration $Q$, let $R_Q$ be its representative and
choose an orthogonal map $\gamma_Q$ carrying $Q$ to $R_Q$. Then
\begin{equation}\label{eq:local-assembly}
 T(x,y,Q)=K_{R_Q}(\gamma_Qx,\gamma_Qy).
\end{equation}
Thus we move the reference set to its representative and evaluate the
corresponding function.

There may be several choices of $\gamma_Q$. To obtain a well-defined
value, we require
\[
 K_R(gx,gy)=K_R(x,y)\quad\text{whenever }gR=R.
\]
This requirement follows from~\eqref{eq:global-invariance}.
It is also sufficient: if $\gamma_Q'$ is another choice, then
$\gamma_Q'\gamma_Q^{-1}$ preserves $R_Q$, and hence gives the same
value in~\eqref{eq:local-assembly}.

Conversely, continuous PSD functions $K_R$ with this symmetry give a
valid continuous PSD function $T$ by~\eqref{eq:local-assembly}.
For positivity, evaluate $K_{R_Q}$ on the transformed list of sphere
points; its matrix is exactly the required matrix of $T$ values.
Joint continuity follows from~\cite[Proposition~3.1]{dL}, since there
are finitely many reference types and each $K_R$ is continuous and
invariant under the transformations preserving $R$. No continuous
choice of $\gamma_Q$ is required; Appendix~\ref{app:assembly-continuity}
gives a direct sequence argument.

\subsection{Expressing the functions in inner-product variables}\label{sec:inner-product-coordinates}
Fix a reference representative $R$. To express $K_R$ using inner
products, use the chosen order $R=(r_1,\ldots,r_m)$ and put
\[
 A_R=[r_1\ \cdots\ r_m],\qquad G_R=A_R^TA_R,
 \qquad u=A_R^Tx,\quad v=A_R^Ty,\quad t=x\cdot y.
\]
Here $A_R$ is an $n\times m$ matrix, $G_R$ is an $m\times m$ matrix,
$u,v\in\mathbb R^m$, and $t\in\mathbb R$.
Thus $u_i=r_i\cdot x$ and $v_i=r_i\cdot y$. Together with $G_R$, the tuple $(t,u,v)$ records every inner product
among the reference points and the two selected points.
We call $u$ and $v$ \emph{inner-product labels}. A label records
only the inner products with the reference points, so distinct selected
points can have the same label. Even when $u=v$, the value of
$t=x\cdot y$ is still needed; for unit vectors, $x=y$ holds exactly
when $t=1$.
For example, with one reference point $r$, these data are simply
$r\cdot x$, $r\cdot y$, and $x\cdot y$, and the joint Gram matrix is
\[
 \begin{pmatrix}1&u&v\\u&1&t\\v&t&1\end{pmatrix}.
\]
No coordinates in $\mathbb R^n$ are needed to describe this three-point shape.

If two pairs $(x,y)$ and $(x',y')$ give the same tuple, the ordered
lists $(r_1,\ldots,r_m,x,y)$ and $(r_1,\ldots,r_m,x',y')$ have the
same Gram matrix. The preceding argument supplies an orthogonal map
fixing each $r_i$ and carrying one pair to the other. Since $K_R$
does not change under such maps, it has a well-defined expression
\begin{equation}\label{eq:local-coordinates}
 K_R(x,y)=F_R(t,u,v)
         =F_R(x\cdot y,A_R^Tx,A_R^Ty).
\end{equation}
The domain of $F_R$ is the set
\[
 \{(x\cdot y,A_R^Tx,A_R^Ty):x,y\in S^{n-1}\}.
\]
Here $x,y$ range over all sphere points; they need not belong to a
code. On this domain $F_R$ is continuous: for a convergent sequence of tuples,
choose realizing pairs. Compactness of the sphere supplies convergent
subsequences of pairs; their limits realize the limiting tuple and
all give the same value of $K_R$. This proves convergence of the
$F_R$ values. Interchanging $x$ and $y$ gives
$F_R(t,u,v)=F_R(t,v,u)$.

The positivity condition is unchanged. For
any finite list $x_1,\ldots,x_N$, we have an equality of matrices
\[
 \bigl(K_R(x_i,x_j)\bigr)_{i,j}
 =\bigl(F_R(x_i\cdot x_j,A_R^Tx_i,A_R^Tx_j)\bigr)_{i,j}.
\]
Let $\operatorname{PD}(S^{n-1},R)$ denote the continuous real-valued
functions $F$ on the displayed inner-product domain that satisfy
$F(t,u,v)=F(t,v,u)$ and the PSD evaluation condition above.
This is the function class used in Musin's construction
(``positive definite'' here means PSD, allowing zero eigenvalues).
Conversely, any function in this class gives a continuous PSD function
of $x,y$ by the same substitution. Invariance under permutations
preserving $G_R$ is an additional requirement for an unordered reference
set; it will be imposed in Section~\ref{sec:reference-order}.
No polynomial assumption or degree restriction has been made in this
coordinate change.

\subsection{Constructing PSD functions from matrices using Musin's polynomials}
We now construct polynomial PSD functions using Musin's polynomials
and PSD coefficient matrices. The construction first requires the
lengths and inner products of the components perpendicular to the
reference span.
Assume from here on that $2\le k\le n$ and that every reference
configuration in $I_{k-2}$ is linearly independent, as
in~\cite[Section~4]{dL}.
We verify this assumption for our applications below.
In particular, $m\le k-2\le n-2$, so $G_R$ is positive definite and
the perpendicular space has dimension at least two.

The projection of $x$ onto the reference span is $A_RG_R^{-1}u$:
it lies in that span and has the same inner products with every
$r_i$ as $x$ does. Thus set
\[
 x_\perp=x-A_RG_R^{-1}u,\qquad y_\perp=y-A_RG_R^{-1}v.
\]
Since $A_R^Tx_\perp=A_R^Ty_\perp=0$, these are the perpendicular
components. Their squared lengths and inner product are
\begin{equation}\label{eq:projection-identities}
 \begin{split}
 h_u:=\|x_\perp\|^2&=1-u^TG_R^{-1}u,\qquad
 h_v:=\|y_\perp\|^2=1-v^TG_R^{-1}v,\\
 w:=x_\perp\cdot y_\perp&=t-u^TG_R^{-1}v.
 \end{split}
\end{equation}
For an empty reference set, all reference matrices and labels are
empty, their products are zero, and $x_\perp=x$, $y_\perp=y$.

The same decomposition gives the feasibility test for labels and
triples used in Section~\ref{sec:framework}.

\begin{lemma}\label{lem:framework-gram}
For an independent reference set of size $m\le n-2$, a vector $u$
is realized as the inner-product label of a unit vector if and only if
\[
 \begin{pmatrix}G_R&u\\u^T&1\end{pmatrix}\succeq0,
 \quad\text{equivalently }1-u^TG_R^{-1}u\ge0.
\]
A triple $(t,u,v)$ is realized by two unit vectors if and only if
\[
 H_{u,v,t}=\begin{pmatrix}
 G_R&u&v\\u^T&1&t\\v^T&t&1
 \end{pmatrix}\succeq0.
\]
\end{lemma}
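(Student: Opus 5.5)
The plan is to reduce both claims to the standard fact that a symmetric matrix is a Gram matrix of vectors in $\R^n$ exactly when it is PSD of rank at most $n$, and then use the orthogonal decomposition $x=A_RG_R^{-1}u+x_\perp$ from~\eqref{eq:projection-identities}. Since $G_R\succ0$ (independence), the Schur complement gives
\[
\begin{pmatrix}G_R&u\\u^T&1\end{pmatrix}\succeq0\iff 1-u^TG_R^{-1}u\ge0,
\]
which proves the stated equivalence of the two conditions for labels. Similarly, $H_{u,v,t}\succeq0$ is equivalent to the $2\times2$ Schur complement
\[
W=\begin{pmatrix}h_u&w\\w&h_v\end{pmatrix}\succeq0,
\]
with $h_u,h_v,w$ as in~\eqref{eq:projection-identities}.

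\emph{Necessity.} If $u=A_R^Tx$ for a unit vector $x$, then $1-u^TG_R^{-1}u=\|x_\perp\|^2\ge0$. Likewise, if $(t,u,v)$ is realized by $x,y$, then $H_{u,v,t}$ is the Gram matrix of $(r_1,\dots,r_m,x,y)$ and hence PSD. Alternatively, $W$ is the Gram matrix of $x_\perp,y_\perp$.

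\emph{Sufficiency.} This is the constructive step. Let $E$ be the orthogonal complement of the reference span; it has dimension $n-m\ge2$. Given $W\succeq0$, factor $W=B^TB$ with $B$ a $2\times2$ real matrix (for instance $B=W^{1/2}$), and pick an orthonormal pair $e_1,e_2\in E$. Set $x_\perp,y_\perp$ to be the vectors of $E$ whose coordinates in $(e_1,e_2)$ are the columns of $B$. Then
\[
\|x_\perp\|^2=h_u,\qquad \|y_\perp\|^2=h_v,\qquad x_\perp\cdot y_\perp=w.
\]
Define $x=A_RG_R^{-1}u+x_\perp$ and $y=A_RG_R^{-1}v+y_\perp$. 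We then check directly:
\begin{itemize}
\item[] $A_R^Tx=G_RG_R^{-1}u=u$;
\item[] $\|x\|^2=u^TG_R^{-1}u+h_u=1$, by the Pythagorean theorem;
\item[] $x\cdot y=u^TG_R^{-1}v+w=t$.
\end{itemize}
The same holds with $y,v$ in place of $x,u$. The single-label case is the same construction using only $e_1$ and $\sqrt{h_u}$, which needs $\dim E\ge1$.

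The only step needing care is this sufficiency construction. The hypothesis $m\le n-2$ is used exactly there, to guarantee two orthonormal directions in $E$ so that an arbitrary PSD $2\times2$ matrix $W$ is realizable. The Schur-complement equivalence itself is routine once $G_R\succ0$.
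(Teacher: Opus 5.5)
Your proof is correct and is essentially the paper's argument: your Schur-complement reduction is the paper's congruence $C^TH_{u,v,t}C=\operatorname{diag}(G_R,W)$ with $C$ invertible. Your sufficiency step is also the paper's. You realize $W$ as the Gram matrix of $x_\perp,y_\perp$ in the perpendicular space of dimension $n-m\ge2$ and set $x=A_RG_R^{-1}u+x_\perp$, $y=A_RG_R^{-1}v+y_\perp$ (with only a notational clash between your $2\times2$ factor $B$ and the paper's Cholesky factor $G_R=BB^T$).
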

\begin{proof}
Realized vectors give these Gram matrices, proving necessity.
For the converse, the following invertible change of variables separates
the reference components from the perpendicular components:
\[
 C^TH_{u,v,t}C=
 \begin{pmatrix}G_R&0&0\\0&h_u&w\\0&w&h_v\end{pmatrix},
 \qquad
 C=\begin{pmatrix}
 I_m&-G_R^{-1}u&-G_R^{-1}v\\
 0&1&0\\0&0&1
 \end{pmatrix}.
\]
One matrix is PSD if and only if the other is, since $C$ is invertible.
If they are PSD, the lower $2\times2$ block is a Gram matrix of two
vectors $x_\perp,y_\perp$ in the perpendicular space, whose dimension is
at least two. Then
\[
 x=A_RG_R^{-1}u+x_\perp,\qquad y=A_RG_R^{-1}v+y_\perp
\]
have the required labels, unit norms, and inner product $t$.
The one-vector assertion uses a perpendicular vector of squared
norm $h_u$. The condition $m+2\le n$ ensures enough space for every
Gram matrix in this argument.
\end{proof}

To compare these formulas with the orthonormal coordinates in Musin
and de Laat, factor $G_R=BB^T$, with $B$ lower triangular and positive
diagonal, and set
\[
 E_R=A_RB^{-T},\qquad L_R(u)=B^{-1}u,\qquad
 L(A_R)=B^{-1}A_R^T.
\]
The columns of $E_R$ are an orthonormal basis of the reference span,
since $E_R^TE_R=I_m$. The map $L_R$ converts an inner-product label
into coordinates in this basis, while $L(A_R)=E_R^T$ takes a sphere
vector to the same coordinates. Thus the projection coordinates used
in those papers are
\[
 \xi=L(A_R)x=B^{-1}u,\qquad \eta=L(A_R)y=B^{-1}v.
\]
Their norms are at most one. The labels $u,v$ themselves need not
have norms at most one, because the reference points need not be
orthogonal. The factor $B$ explains the relation between the two
coordinate systems. The formulas used in the computation depend only
on the rational expressions in the Gram and label entries
in~\eqref{eq:projection-identities}, without computing $B$ or square roots.

We can now use these projection quantities to construct functions
whose evaluation matrices are PSD. Put $p=n-m$, the dimension of
the perpendicular space. For each
nonnegative integer $l$, let $P_l^p$ be the Gegenbauer polynomial
of degree $l$, normalized by $P_l^p(1)=1$. Applying $P_l^p$ to the
inner products of unit vectors in this space gives a PSD matrix.
For $p=2$, use the Chebyshev polynomials of the first kind.
Musin applies these polynomials to the normalized perpendicular
components and then multiplies back their lengths:
\[
 P_l^{n,m}(t,\xi,\eta)=
 \bigl((1-\|\xi\|^2)(1-\|\eta\|^2)\bigr)^{l/2}
 P_l^{n-m}\!\left(
 \frac{t-\xi\cdot\eta}
 {\sqrt{(1-\|\xi\|^2)(1-\|\eta\|^2)}}\right).
\]
The fraction is the inner product of the normalized perpendicular
vectors. The factor outside the polynomial is the product of their lengths to the power $l$.
Initially suppose these lengths are nonzero. In our labels, write
the same expression as
\begin{equation}\label{eq:gram-gegenbauer}
 P_l^{n,m}(G_R;u,v,t)
 :=P_l^{n,m}(t,B^{-1}u,B^{-1}v).
\end{equation}
\begin{samepage}
Using~\eqref{eq:projection-identities}, the apparent divisions cancel.
For the fixed dimension $p=n-m$,
write the Gegenbauer polynomial as
$P_l^p(s)=\sum_{j=0}^{\lfloor l/2\rfloor}a_{l,j}s^{l-2j}$,
where the coefficients $a_{l,j}$ depend on $p$. Only powers with the
same parity as $l$ occur. Recall that
\[
 h_u=1-u^TG_R^{-1}u,\qquad
 h_v=1-v^TG_R^{-1}v,\qquad w=t-u^TG_R^{-1}v.
\]
Substitution gives
\[
 P_l^{n,m}(G_R;u,v,t)
 =\sum_{j=0}^{\lfloor l/2\rfloor}
       a_{l,j}w^{l-2j}(h_uh_v)^j.
\]
\end{samepage}
This polynomial formula also defines the value when either
perpendicular component is zero. Suppressing the common arguments
$(G_R;u,v,t)$ in the next two displays, the polynomials of degrees
zero, one and two are
\begin{equation}\label{eq:kernels}
 P_0^{n,m}=1,\qquad P_1^{n,m}=w,\qquad
 P_2^{n,m}=\frac{p w^2-h_uh_v}{p-1}.
\end{equation}
In particular, the degree-one matrix is just the Gram matrix of the
perpendicular components. Starting from $P_0^{n,m}$ and $P_1^{n,m}$,
all further degrees are evaluated by
\begin{equation}\label{eq:homogeneous-recurrence}
 P_l^{n,m}=\frac{(2l+p-4)wP_{l-1}^{n,m}
                    -(l-1)h_uh_vP_{l-2}^{n,m}}{l+p-3}
 \qquad(l\ge2).
\end{equation}
The denominators are positive because $p\ge2$.

For every $l$, these functions have PSD evaluation matrices on any
finite list of sphere points. To see this, normalize the nonzero
perpendicular components and take their Gegenbauer evaluation matrix,
which is PSD. Multiplying row and column $i$ by $\|(x_i)_\perp\|^l$
preserves PSD and gives the displayed values. A zero component gives
a zero row for $l>0$; at $l=0$ every entry is one.
This is Musin's construction~\cite[Theorem~3.1]{Musin}.

We now restrict the class of functions used in the search. Fix a
nonnegative integer $d$ and consider polynomials $F_R$ in all the
variables $(t,u,v)$ whose restrictions to the sphere-point domain
belong to $\operatorname{PD}(S^{n-1},R)$ and whose degree in $t$ is at
most $d$. We use the same notation for such a polynomial and its
restriction to that domain. No degree bound is imposed on its
coefficients as polynomials in $u,v$.

Unlike the coordinate change, this restriction may weaken the best
bound. We search directly in the restricted class and require all the
original inequalities in~\eqref{eq:delaat4}; feasibility is not inferred
by truncating another function. Every feasible choice still gives a
valid bound by Proposition~\ref{prop:certificate}.

A symmetric polynomial $f(u,v)$ is called a \emph{PSD polynomial}
if it has a representation
\[
 f(u,v)=z(u)^THz(v),\qquad H\succeq0,
\]
where $z$ is a finite column vector of real polynomials and $H$ is a
constant real symmetric matrix of matching order. This is the
convention of Musin~\cite[Definition~2.3]{Musin}. For any finite list of labels,
the evaluation matrix is PSD, since it is of the form $Z^THZ$, where
the columns of $Z$ are the corresponding values of $z$. This condition
does not require every scalar value $f(u,v)$ to be nonnegative.

Musin's polynomial theorem~\cite[Theorem~3.2]{Musin} gives
\begin{equation}\label{eq:musin-expansion}
 F_R(t,u,v)=\sum_{l=0}^d
       f_{R,l}(u,v)P_l^{n,m}(G_R;u,v,t),
 \qquad f_{R,l}\text{ a PSD polynomial}.
\end{equation}
Here the scalar factors $P_l^{n,m}(G_R;u,v,t)$ are fixed by $n,R,l$.
The choices are the coefficient polynomials $f_{R,l}(u,v)$, which
do not depend on $t$. They are functions of the labels, not constant scalars.
To apply the theorem in our coordinates, first apply it to
$F_R(t,B\xi,B\eta)$. If $g_l(\xi,\eta)$ is a coefficient in the
resulting expansion, set
$f_{R,l}(u,v)=g_l(B^{-1}u,B^{-1}v)$. An invertible linear substitution
in the polynomial vector preserves its PSD matrix representation.

Conversely, choosing any PSD coefficient polynomials in this finite
sum gives a polynomial whose restriction belongs to
$\operatorname{PD}(S^{n-1},R)$. For each $l$, its evaluation matrix
is the entrywise product of the coefficient evaluation matrix and
the Musin polynomial evaluation matrix. Both are PSD, so their entrywise product is PSD by the Schur
product property, and summing over $l$ preserves PSD. Since the coefficients do not depend on $t$
and $P_l^{n,m}(G_R;u,v,t)$ has degree $l$ in $t$, the sum has degree
at most $d$ in $t$.

For each pair $(R,l)$, write the coefficient polynomial as
\[
 f_{R,l}(u,v)=z_{R,l}(u)^TY_{R,l}z_{R,l}(v),
 \qquad Y_{R,l}\succeq0.
\]
Here $z_{R,l}$ is a finite column vector of real polynomials in $m$
variables, and $Y_{R,l}$ is a constant real symmetric matrix whose
order equals the length of this vector. Thus the construction has
the matrix form
\[
 F_R(t,u,v)=\sum_{l=0}^d
 \bigl(z_{R,l}(u)^TY_{R,l}z_{R,l}(v)\bigr)
 P_l^{n,m}(G_R;u,v,t),
 \qquad Y_{R,l}\succeq0.
\]
Once the polynomial vectors are fixed, these matrices determine the
coefficient functions and hence $F_R$. However, each coefficient may
require its own polynomial vector, and we have not imposed a common
degree bound on those vectors. Fixing $d$ alone therefore does not
specify the orders of the matrices $Y_{R,l}$. Prescribing particular
finite vectors $z_{R,l}$ can impose a further restriction on the
coefficient functions.

De Laat et al.~\cite[Theorem~4.1]{dL} make a particular choice of
these polynomial vectors. Let $z_q(\xi)$ contain all monomials of total
degree at most $q$ in the $m$ coordinates of $\xi$, including $1$,
in a fixed order. To compare at the same Gegenbauer degrees
$0,\ldots,d$, take a separate integer $\delta\ge d$ and specialize to
\[
 z_{R,l}(u)=z_{\delta-l}(B^{-1}u).
\]
With this choice, the coefficient representation becomes
\[
 f_{R,l}(u,v)=z_{\delta-l}(B^{-1}u)^T Y_{R,l}
 z_{\delta-l}(B^{-1}v),\qquad Y_{R,l}\succeq0.
\]
Here $Y_{R,l}$ is chosen relative to this prescribed monomial vector;
its order is the length of $z_{\delta-l}$. The coefficient has degree
at most $\delta-l$ in each of $u,v$ separately, and joint total degree
at most $2(\delta-l)$. Thus $d$ limits only the Gegenbauer degrees,
whereas $\delta$ also limits the coefficient degrees. The comparison
uses the same terms $0,\ldots,d$; including terms with $l>d$ changes
the search space. Their result for continuous functions gives uniform
approximation as the polynomial model grows, not an exact description
by a fixed model.

Section~\ref{sec:framework} will instead use matrices whose entries are
the values of $f_{R,l}$ on the finitely many feasible labels. These are
evaluation matrices, not the coefficient matrices $Y_{R,l}$ relative
to a polynomial vector. PSD interpolation will extend the finite tables
to coefficient polynomials, without prescribing their degrees in
advance. This retains the Gegenbauer degree bound $d$; Appendix~\ref{app:degree}
gives explicit coefficient degree bounds and the comparison with
polynomial-basis representations.

\subsection{Symmetry reduction}\label{sec:reference-order}
We have already used orthogonal symmetry to choose one function for
each reference type. The remaining symmetry is invariance under
permutations of the reference points that preserve $G_R$.
The formulas above use an ordered list $r_1,\ldots,r_m$, whereas the
original reference configuration $Q$ is an unordered set. We must
obtain the same answer from every ordering that gives the chosen
Gram matrix. We impose this invariance by averaging. Let
\[
 S_R=\{\sigma:P_\sigma^TG_RP_\sigma=G_R\},
\]
where $\sigma$ ranges over permutations of $\{1,\ldots,m\}$ and
$P_\sigma$ is the corresponding permutation matrix, defined by
$P_\sigma e_i=e_{\sigma(i)}$ for the standard coordinate vectors $e_i$.
For $m=0$, use the group with one element and the empty permutation
matrix, so the average below is the identity operation.
These are precisely the reorderings that preserve all reference
inner products. Each is realized by an orthogonal transformation
$U_\sigma$ with $U_\sigma A_R=A_RP_\sigma$: it preserves inner
products on the reference span, and can be extended by the identity
on the perpendicular space. Replace $F_R$ by the average
\begin{equation}\label{eq:coordinate-average}
 \widetilde F_R(t,u,v)=\frac1{|S_R|}
       \sum_{\sigma\in S_R}F_R(t,P_\sigma^Tu,P_\sigma^Tv).
\end{equation}
Indeed, $A_R^TU_\sigma^Tx=P_\sigma^Tu$, so each summand comes from
applying the original PSD function to the transformed points
$U_\sigma^Tx,U_\sigma^Ty$. Its matrix is PSD, and averaging preserves
that property. It also preserves polynomiality and the degree bound
in $t$, since only the label coordinates are permuted.
The average is unchanged by these reorderings.
Every orthogonal transformation carrying $R$ to itself permutes its
points in this way, possibly followed by a map fixing every reference
point. The latter already leaves $(t,u,v)$ unchanged. Thus the
average has exactly the symmetry required for~\eqref{eq:local-assembly}.

This is the operation in~\cite[Corollary~4.2]{dL}. In their coordinates,
\[
 L(A_RP_\sigma)x=B^{-1}P_\sigma^Tu
                   =L(A_R)U_\sigma^Tx.
\]
The factor $B$ is the same because
$(A_RP_\sigma)^T(A_RP_\sigma)=G_R$.
Finally choose $\gamma_Q\in O(n)$ such that $\gamma_Q(Q)=R_Q$,
and order $Q$ so that
$A_Q=\gamma_Q^TA_{R_Q}$. Substitution in~\eqref{eq:local-assembly} gives
\begin{equation}\label{eq:T-coordinate-bridge}
 T(x,y,Q)=\widetilde F_{R_Q}
              (x\cdot y,A_Q^Tx,A_Q^Ty).
\end{equation}
The resulting $T$ is continuous, PSD in its sphere arguments, and
independent of the allowed choices of ordering and orthogonal maps.
Its coefficients must still satisfy all inequalities
in~\eqref{eq:delaat4}. On code points, the labels lie in the finite set
$(\Lambda\cup\{1\})^{|Q|}$. Section~\ref{sec:framework} replaces the coefficient functions by PSD
tables on these labels, implements~\eqref{eq:coordinate-average} at the
matrix level, and removes the remaining redundant coordinates. The
result is the SDP used below for the fixed degrees $0,\ldots,d$; every
feasible solution certifies a bound by Proposition~\ref{prop:certificate}.

\section{Reductions and the final SDP formulation}\label{sec:framework}
The previous section leaves the coefficient polynomials $f_{R,l}(u,v)$
in~\eqref{eq:musin-expansion} to be chosen. The code constraints do not
use these polynomials everywhere: they use only their values on the
feasible inner-product labels. We exploit this in three steps. First we
replace each coefficient polynomial by its table of values on those
labels, without imposing any additional degree bound in $u,v$. We then
remove the redundancy coming from reference orderings and, in degree
zero, combine the reference coordinates that the union inequalities use
only through their sums. The first step gives an exact description of
the coefficient freedom for the retained Gegenbauer degrees; the next
two preserve the feasible objective values. We then assemble the SDP
used in the computation. The label-indexed viewpoint uses the same
evaluation principle that underlies the alternative constraints
of Kao--Yu~\cite[Theorem~2.13]{KY}.

\subsection{Reduction to coefficient values}
Keep an ordered, linearly independent reference set $R$ of size
$m\le n-2$, and write $G=G_R$ when no confusion is possible. Put
$s=|\Lambda|$ and $\Delta=\Lambda\cup\{1\}$. The feasible labels are
\begin{equation}\label{eq:framework-labels}
\begin{split}
 U_R^{\mathrm{gen}}&=\{u\in\Lambda^m:1-u^TG^{-1}u\ge0\},\\
 U_R&=U_R^{\mathrm{gen}}\cup\{Ge_1,\ldots,Ge_m\}.
\end{split}
\end{equation}
Here ``gen'' means that the point is outside the reference set. Indeed,
if $x\notin R$ belongs to a spherical $\Lambda$-code containing $R$,
then $A_R^Tx\in\Lambda^m$. If $x=r_i$, its label is $Ge_i$, where $e_i$ is the $i$th standard
basis vector of $\R^m$.
Conversely, Lemma~\ref{lem:framework-gram} shows that every label in
$U_R^{\mathrm{gen}}$ is realized by a unit vector with the prescribed
inner products with $R$. A feasible label in $\Delta^m$ having a
coordinate $1$ must be a reference label: $x\cdot r_i=1$ for unit
vectors implies $x=r_i$. Thus the displayed list is complete, and
\[
 |U_R|\le s^m+m,\qquad |U_R^{\mathrm{gen}}|\le s^m.
\]
The two parts of $U_R$ are disjoint. For $m=0$, $U_R$ and $U_R^{\mathrm{gen}}$ each contain
one empty label. The joint feasible triples form the finite set
\begin{equation}\label{eq:finite-domain}
 \Omega_R=\{(t,u,v):t\in\Delta,\ u,v\in U_R,\ H_{u,v,t}\succeq0\},
\end{equation}
where $H_{u,v,t}$ is the enlarged Gram matrix in
Lemma~\ref{lem:framework-gram}. By that lemma, $\Omega_R$ is the
part of the full sphere-point domain used in the code constraints:
it retains only the inner products in $\Delta$ and labels in $U_R$.
Feasibility of $u$ and $v$ separately does not make every choice of
$t\in\Delta$ possible. The PSD property is still required for arbitrary
sphere points, so specifying values on $\Omega_R$ will require a
polynomial extension with that property.

For example, take $\Lambda=\{a,b\}$ and two reference points with
$r_1\cdot r_2=\rho$. A point outside $R$ has one of the four labels
$(a,a),(a,b),(b,a),(b,b)$, provided it is feasible. The reference
points have labels $(1,\rho)$ and $(\rho,1)$. When all four outside
labels are feasible, as in our applications, the degree-zero
coefficient values form a $6\times6$ table. Its $(u,v)$ entry is
$f_{R,0}(u,v)$, not the inner product of two code points.

Evaluating a PSD polynomial at a finite set of labels always gives
a PSD table. For an SDP variable we also need the converse: every
PSD table must extend to a PSD polynomial, so that it defines a
function on all sphere points, not just the selected labels.
The next lemma constructs such an extension. Let
$W=\{w^{(1)},\ldots,w^{(N)}\}$ be a finite set of distinct labels.

\begin{lemma}\label{lem:framework-evaluation}
Every real symmetric $N\times N$ matrix $M$ has a symmetric polynomial
interpolant $f$ satisfying
\[
 f(w^{(i)},w^{(j)})=M_{ij}\qquad(1\le i,j\le N).
\]
Such an interpolant can be chosen to be a PSD polynomial if and only
if $M\succeq0$.
\end{lemma}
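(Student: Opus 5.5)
The plan is to use Lagrange interpolation polynomials for the finite label set $W$. Since the labels $w^{(1)},\ldots,w^{(N)}\in\R^m$ are distinct, I will construct polynomials $\ell_1,\ldots,\ell_N$ in $m$ variables with $\ell_i(w^{(j)})=\delta_{ij}$. To build them, for each pair $i\ne j$ choose a coordinate $c$ in which $w^{(i)}$ and $w^{(j)}$ differ, form the affine factor
\[
\frac{u_c-w^{(j)}_c}{w^{(i)}_c-w^{(j)}_c},
\]
and take $\ell_i$ to be the product of these factors over $j\ne i$. (For $m=0$ the set $W$ has a single empty label and $\ell_1=1$.) Then put $z(u)=(\ell_1(u),\ldots,\ell_N(u))^T$ and
\[
f(u,v)=z(u)^TMz(v)=\sum_{i,j}M_{ij}\ell_i(u)\ell_j(v).
\]
This is a symmetric polynomial because $M$ is symmetric, and it satisfies $f(w^{(i)},w^{(j)})=M_{ij}$. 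That proves the first assertion.

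For the equivalence, suppose first that $M\succeq0$. Then the same $f$ is already of the form $z(u)^THz(v)$ with $H=M\succeq0$, so it is a PSD polynomial in the sense of Musin's definition recalled above. For the converse, suppose some interpolant is a PSD polynomial, $f(u,v)=z'(u)^THz'(v)$ with $H\succeq0$. Its evaluation matrix on $W$ is $Z^THZ$, where the columns of $Z$ are $z'(w^{(i)})$. This matrix is PSD, as already noted in the text, and by the interpolation property it equals $M$. Hence $M\succeq0$.

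No real obstacle appears. The only care needed is to check that the Lagrange basis exists for arbitrary distinct points in several variables, which the coordinatewise product construction handles. One could also remark that the degree of each $\ell_i$ is at most $N-1$, which could later feed the degree bounds of Appendix~\ref{app:degree}.
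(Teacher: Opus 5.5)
Your proof is correct and is essentially the paper's: you build indicator polynomials on $W$, take $f(u,v)=z(u)^TMz(v)$, and obtain the converse from $M=Z^THZ\succeq0$. The only difference is the choice of indicators. You use products of affine factors in a separating coordinate, of degree at most $N-1$, whereas the paper uses $\prod_{a\in W\setminus\{w\}}\|u-a\|^2/\|w-a\|^2$, of degree $2(N-1)$; this makes no difference for the lemma.
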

\begin{proof}
For each $w\in W$, choose a polynomial $\phi_w$ whose value on $W$
is $1$ at $w$ and $0$ at every other point. For example,
\[
 \phi_w(u)=\prod_{a\in W\setminus\{w\}}
               \frac{\|u-a\|^2}{\|w-a\|^2}
\]
has these values, and no denominator is zero. Let $\phi$ be the
column vector of these polynomials in the chosen order of $W$.
Then $f(u,v)=\phi(u)^TM\phi(v)$ has the prescribed evaluations.
If $M\succeq0$, this formula is also a PSD polynomial representation.

Conversely, suppose a PSD polynomial $f(u,v)=z(u)^THz(v)$,
$H\succeq0$, interpolates $M$. Let $Z$ have columns
$z(w^{(1)}),\ldots,z(w^{(N)})$. Then $M=Z^THZ\succeq0$,
since $c^TMc=(Zc)^TH(Zc)\ge0$ for every $c$.
\end{proof}

The construction chooses a PSD interpolant; an arbitrary polynomial
with the same finite values need not be PSD. For an empty label set,
take the empty matrix and the zero polynomial.

The interpolation lemma describes all coefficient tables that can occur.
For the Musin expansion we can reduce the positive-degree tables a little
further. A reference point lies entirely in the reference span, so its
perpendicular component is zero:
$1-(Ge_i)^TG^{-1}(Ge_i)=0$. The formulas of the previous section then
give $P_l^{n,m}(G;u,v,t)=0$ for $l>0$ whenever either selected point
is a reference point. Only $P_0=1$ uses reference labels.
We therefore keep all labels in $M_{R,0}$, and only labels of points
outside $R$ in $M_{R,l}$ for $l>0$. In the two-reference example above,
these matrix orders are $6$ and $4$, respectively.

\begin{samepage}
\begin{theorem}\label{thm:framework-reduction}
Fix an integer $d\ge0$. The restrictions to $\Omega_R$ of polynomial
functions $F\in\operatorname{PD}(S^{n-1},R)$ of degree at most $d$ in
$t$ are exactly the finite data
\begin{equation}\label{eq:framework-matrix-kernel}
 F(t,u,v)=\sum_{l=0}^d P_l^{n,m}(G_R;u,v,t)(M_{R,l})_{u,v},
\end{equation}
where $M_{R,0}\succeq0$ is indexed by $U_R$, and $M_{R,l}\succeq0$
is indexed by $U_R^{\mathrm{gen}}$ for $l>0$.
A positive-degree summand is zero if either label is a reference label.
No degree bound is imposed on the coefficient polynomials in $u,v$.
If $U_R^{\mathrm{gen}}$ is empty, its matrices have order zero and
all positive-degree terms vanish.
\end{theorem}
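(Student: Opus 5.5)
The statement has two inclusions. I will prove each using Musin's expansion~\eqref{eq:musin-expansion} together with the interpolation Lemma~\ref{lem:framework-evaluation}.

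\emph{Restrictions have the stated form.} Let $F\in\operatorname{PD}(S^{n-1},R)$ be a polynomial of degree at most $d$ in $t$. By~\eqref{eq:musin-expansion},
\[
 F=\sum_{l=0}^d f_{R,l}(u,v)\,P_l^{n,m}(G_R;u,v,t)
\]
with PSD polynomials $f_{R,l}$. Define $M_{R,0}$ by evaluating $f_{R,0}$ on $U_R\times U_R$. For $l>0$, define $M_{R,l}$ by evaluating $f_{R,l}$ on $U_R^{\mathrm{gen}}\times U_R^{\mathrm{gen}}$. By the converse half of Lemma~\ref{lem:framework-evaluation} (the $Z^THZ$ argument), each of these tables is PSD. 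On $\Omega_R$ the labels lie in $U_R$.

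Now suppose $l>0$ and one label is a reference label $Ge_i$. Then $h_u=0$ or $h_v=0$. A reference point lies in the reference span, so its perpendicular component is zero, and $w=0$ as well. Indeed, $w=t-u^TG^{-1}v$ with $u=Ge_i$ gives $t-e_i^Tv=t-v_i$, and on $\Omega_R$ we have $t=x\cdot y=r_i\cdot y=v_i$. Every term $a_{l,j}w^{l-2j}(h_uh_v)^j$ with $l\ge1$ therefore vanishes: when $j\ge1$ it contains $h_uh_v=0$, and when $j=0$ it contains $w^l=0$. So the restriction of $F$ to $\Omega_R$ is exactly~\eqref{eq:framework-matrix-kernel}, with the convention that positive-degree summands are zero on reference labels. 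The identity $w=0$ must be checked on realized triples rather than formally, and the PSD condition defining $\Omega_R$ guarantees that the triple is realized.

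\emph{Every such data set arises.} Given PSD matrices $M_{R,0}$ indexed by $U_R$ and $M_{R,l}$ indexed by $U_R^{\mathrm{gen}}$ for $l>0$, apply Lemma~\ref{lem:framework-evaluation} to each matrix. This gives PSD interpolating polynomials $f_{R,l}$. For $l>0$ these are interpolants on $U_R^{\mathrm{gen}}$ only; their values at reference labels are irrelevant because the Musin factor vanishes there. The labels are distinct, so the lemma applies; for an empty set, take the zero polynomial. Set $F=\sum_l f_{R,l}P_l^{n,m}$.

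By the Schur product argument after~\eqref{eq:musin-expansion}, $F$ is a polynomial whose restriction lies in $\operatorname{PD}(S^{n-1},R)$. It is symmetric in $u,v$ because both $f_{R,l}$ and $P_l^{n,m}$ are symmetric, and its degree in $t$ is at most $d$. Its values on $\Omega_R$ agree with~\eqref{eq:framework-matrix-kernel} by the interpolation property, together with the vanishing of the positive-degree terms at reference labels shown above. The empty-$U_R^{\mathrm{gen}}$ case is then immediate.

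\emph{Main obstacle.} The only delicate point is the vanishing of positive-degree terms at reference labels, in particular the identity $w=0$, which has to be justified via realizability. The rest is bookkeeping using results already stated. No bound on coefficient degree enters anywhere, since the interpolants are of arbitrary degree.
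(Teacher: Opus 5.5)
Your proof is correct and follows the paper's argument. For one inclusion you use Musin's expansion, evaluate it on the labels and apply the $Z^THZ$ half of Lemma~\ref{lem:framework-evaluation}; for the other you use PSD interpolation and the Schur product theorem, with positive-degree factors vanishing at reference labels. Your explicit check that $h_u=0$ and $w=t-v_i=0$ on realized triples is an algebraic version of the paper's remark: a reference point has zero perpendicular component, so its row vanishes for $l>0$.
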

\end{samepage}
\begin{proof}
Suppose first that $F$ is such a polynomial in the PD class. Musin's
polynomial theorem gives
\[
 F(t,u,v)=\sum_{l=0}^d f_l(u,v)P_l^{n,m}(G_R;u,v,t),
 \qquad f_l\text{ a PSD polynomial}.
\]
For $l=0$, evaluate $f_0$ on all of $U_R\times U_R$ to obtain
$M_{R,0}$. For $l>0$, evaluate $f_l$ on
$U_R^{\mathrm{gen}}\times U_R^{\mathrm{gen}}$ to obtain $M_{R,l}$.
Lemma~\ref{lem:framework-evaluation} proves that these matrices are PSD.
Deleting the reference rows in positive degrees changes no function value,
because their scalar factor vanishes. Substitution gives the displayed
formula on every feasible triple.

Conversely, start with the displayed PSD matrices. By
Lemma~\ref{lem:framework-evaluation}, choose PSD polynomials
$f_l$ interpolating their entries on the corresponding label sets.
Define a polynomial on all variables by
\[
 \widehat F(t,u,v)=\sum_{l=0}^d
       f_l(u,v)P_l^{n,m}(G_R;u,v,t).
\]
For any sphere points $x_1,\ldots,x_N$, the matrix of $f_l$ values
is PSD, as is the matrix of scalar Gegenbauer function values. Their
entrywise product is PSD by the Schur product theorem; summing over
$l$ preserves PSD. Thus $\widehat F\in\operatorname{PD}(S^{n-1},R)$.
It has degree at most $d$ in $t$ and agrees with the prescribed data:
interpolation gives agreement at labels of points outside $R$, and
positive-degree terms vanish whenever a reference label occurs.
\end{proof}

The SDP can now choose the coefficient-value matrices directly. The
reverse direction of the theorem is essential: every PSD table is the
evaluation of some PSD coefficient polynomial, so this replacement does
not introduce spurious matrix choices.

\medskip\noindent\textbf{Comparison with polynomial-basis formulations.}
Suppose instead that a coefficient is represented in a prescribed
polynomial vector $z(u)$ as $f(u,v)=z(u)^THz(v)$ with $H\succeq0$, and
let $E$ be the evaluation matrix whose rows are $z(w)^T$ on the relevant
label set $W$. Its coefficient table has the form $EHE^T$ and is therefore
one of the PSD tables allowed above. Hence, for the same retained
Gegenbauer degrees, any prescribed finite polynomial-basis formulation
is contained in the coefficient-value formulation of
Theorem~\ref{thm:framework-reduction}. Equality holds once $E$ has full
row rank. Appendix~\ref{app:degree} proves this criterion and gives
explicit degree bounds for the monomial basis used by de Laat
et al.~\cite[Theorem~4.1]{dL}. Thus the present reduction removes an additional coefficient-degree
truncation while retaining the Gegenbauer degree bound $d$. It also explains
why label-indexed constraints have the same limiting coefficient freedom
as the alternative formulation of Kao--Yu
\cite[Theorem~2.13 and Corollary~2.14]{KY} once the polynomial
evaluation matrix on the allowed labels has full row rank.

The last qualification is important. The parameter $d$ still specifies
which scalar factors $P_0,\ldots,P_d$ are available. Agreement at the
finitely many allowed values of $t$ does not justify discarding higher
Gegenbauer degrees, because ordinary interpolation need not preserve the
PSD coefficient structure. Remark~\ref{rem:scalar-degree} in
Appendix~\ref{app:degree} gives a simple example.

\subsection{Reference-symmetry reduction}
The second reduction removes the dependence on the chosen ordering of
the reference points. We implement the averaging in
Section~\ref{sec:reference-order} directly on the coefficient-value
matrices. A label lists inner products in a chosen reference
order; a different allowed order permutes its coordinates.
Recall the notation
\[
 S_R=\{\sigma:P_\sigma^TG_RP_\sigma=G_R\}.
\]
Here $P_\sigma$ permutes the $m$ reference coordinates. It also permutes
each label set by $u\mapsto P_\sigma^Tu$. Define the corresponding
label permutation matrix $\Pi_\sigma$ by
$(\Pi_\sigma M\Pi_\sigma^T)_{u,v}=M_{P_\sigma^Tu,P_\sigma^Tv}$.
The ordering of each label set fixes this matrix unambiguously.

\begin{proposition}\label{prop:framework-symmetry}
Suppose $M_{R,l}\succeq0$ for each $l$. Then the matrix
\[
 \widetilde M_{R,l}=\frac1{|S_R|}\sum_{\sigma\in S_R}
                \Pi_\sigma M_{R,l}\Pi_\sigma^T
\]
is PSD and invariant under all label permutations from $S_R$.
Using these matrices in~\eqref{eq:framework-matrix-kernel} gives exactly
\[
 \widetilde F_R(t,u,v)=\frac1{|S_R|}\sum_{\sigma\in S_R}
                      F_R(t,P_\sigma^Tu,P_\sigma^Tv).
\]
These finite values have a polynomial extension in
$\operatorname{PD}(S^{n-1},R)$ with the same reference symmetry.
\end{proposition}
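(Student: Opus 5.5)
The argument has three parts: positivity and invariance of the averaged matrices, the identity of finite values, and a symmetric polynomial extension.

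\textbf{Positivity and invariance.} Each $\Pi_\sigma$ is a permutation matrix on the relevant label set. This uses the fact that $u\mapsto P_\sigma^Tu$ maps $U_R$ bijectively to itself, and likewise $U_R^{\mathrm{gen}}$. Indeed, $P_\sigma^T$ preserves $\Lambda^m$ and the quantity $1-u^TG^{-1}u$, because $P_\sigma^TG^{-1}P_\sigma=G^{-1}$. It also sends reference labels $Ge_i$ to $P_\sigma^TGe_i=GP_\sigma^Te_i=Ge_{\sigma^{-1}(i)}$. Hence each conjugate $\Pi_\sigma M_{R,l}\Pi_\sigma^T$ is congruent to $M_{R,l}$ and is PSD, and a convex combination of PSD matrices is PSD. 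For invariance, note that $\sigma\mapsto\Pi_\sigma$ is a group homomorphism, or an anti-homomorphism, which suffices. Conjugating the average by $\Pi_\tau$ for $\tau\in S_R$ therefore only reindexes the sum over the group $S_R$.

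\textbf{The finite values.} I will first check that the scalar factors are invariant: $P_l^{n,m}(G;P_\sigma^Tu,P_\sigma^Tv,t)=P_l^{n,m}(G;u,v,t)$. This holds because $h_u$, $h_v$ and $w$ in \eqref{eq:projection-identities} depend on the labels only through $G^{-1}$-forms, and these are preserved since $P_\sigma G^{-1}P_\sigma^T=G^{-1}$. I will also check that $(t,u,v)\in\Omega_R$ implies $(t,P_\sigma^Tu,P_\sigma^Tv)\in\Omega_R$. This follows by congruence of $H_{u,v,t}$ with $\operatorname{diag}(P_\sigma,1,1)$. Substituting into \eqref{eq:framework-matrix-kernel} then gives the claim term by term: the $(u,v)$ entry of $\Pi_\sigma M\Pi_\sigma^T$ is $M_{P_\sigma^Tu,P_\sigma^Tv}$, and the scalar factor can be evaluated at the permuted labels. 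Averaging yields exactly $\widetilde F_R$ on $\Omega_R$.

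\textbf{Polynomial extension.} Apply Theorem~\ref{thm:framework-reduction} to the matrices $M_{R,l}$ to obtain a polynomial $\widehat F\in\operatorname{PD}(S^{n-1},R)$ that agrees with the data on $\Omega_R$. Then average $\widehat F$ as in \eqref{eq:coordinate-average}. Section~\ref{sec:reference-order} already shows that this average stays in $\operatorname{PD}(S^{n-1},R)$ and is polynomial with degree at most $d$ in $t$. It is invariant under $S_R$ because averaging over a group gives an invariant result. By the previous step, its values on $\Omega_R$ are the averaged data, which are the values given by $\widetilde M_{R,l}$.

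The main obstacle is bookkeeping rather than analysis. One must make sure the permutation action on labels preserves each label set, including the special reference labels in degree zero, and that the indexing convention for $\Pi_\sigma$ is consistent. A convention slip would only replace $\sigma$ by $\sigma^{-1}$, which is harmless because the sum runs over a group.
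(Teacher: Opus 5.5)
Your proposal is correct and follows essentially the paper's route: permutation congruence and reindexing over the group $S_R$ give positivity and invariance, invariance of the $G^{-1}$-forms under $P_\sigma$ gives the identity of finite values, and averaging the polynomial extension from Theorem~\ref{thm:framework-reduction} gives the symmetric extension. Your additional checks, that $u\mapsto P_\sigma^Tu$ preserves $U_R$, $U_R^{\mathrm{gen}}$, the reference labels and $\Omega_R$, spell out what the paper only asserts when it defines $\Pi_\sigma$.
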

\begin{proof}
Simultaneously permuting the rows and columns of a PSD matrix preserves
positivity, so each summand, and then their average, is PSD. A further permutation
from $S_R$ merely reorders the summands, proving invariance.
Since $P_\sigma$ is orthogonal, $P_\sigma^TGP_\sigma=G$ implies
$GP_\sigma=P_\sigma G$, and hence
$P_\sigma G^{-1}P_\sigma^T=G^{-1}$. Thus the quantities
$u^TG^{-1}u$, $v^TG^{-1}v$, and $u^TG^{-1}v$ are unchanged when
$u,v$ are replaced by $P_\sigma^Tu,P_\sigma^Tv$.
The scalar Gegenbauer factors are therefore unchanged, and averaging
acts only on their coefficient matrices, giving the stated identity.
Finally, take the polynomial extension constructed in
Theorem~\ref{thm:framework-reduction} and average it by this same formula.
Each coordinate permutation is induced by an orthogonal transformation
of the reference configuration, so every summand remains in
$\operatorname{PD}(S^{n-1},R)$. The average is the required invariant extension.
\end{proof}

We may consequently use unrestricted PSD matrices and average during
coefficient assembly, as the program does. This does not change the feasible objective values, because averaging
twice gives the same matrix as averaging once:
$\widetilde{\widetilde M}_{R,l}=\widetilde M_{R,l}$.
Each dual inequality already uses the averaged matrices. Replacing
every $M_{R,l}$ by $\widetilde M_{R,l}$ therefore leaves those function
values, and the objective
$\alpha$, unchanged. Conversely, an invariant PSD matrix is already allowed among
the unrestricted variables. The two formulations thus have exactly the
same feasible objective values. This does not assert that an unaveraged
function is equal to its average.

A positive definiteness margin is preserved as well. For any
$\varepsilon>0$, if $M_{R,l}-\varepsilon I\succ0$, then
\[
 \widetilde M_{R,l}-\varepsilon I
 =\frac1{|S_R|}\sum_{\sigma\in S_R}
       \Pi_\sigma(M_{R,l}-\varepsilon I)\Pi_\sigma^T\succ0.
\]
Matrix-invariance equations are therefore optional in this model.
The computations below use the averaged coefficients and unrestricted
PSD decision matrices.

For each $Q$, let $R_Q$ be its chosen representative and put
$A_Q=\gamma_Q^TA_{R_Q}$, as in Section~\ref{sec:coordinates}.
For code points, the function entering the original dual is now explicitly
\begin{equation}\label{eq:assembled-kernel}
 T(x,y,Q)=\sum_{l=0}^d
 P_l^{n,|Q|}(G_{R_Q};A_Q^Tx,A_Q^Ty,x\cdot y)
 (\widetilde M_{R_Q,l})_{A_Q^Tx,A_Q^Ty}.
\end{equation}
Globally, $T$ is defined using the averaged polynomial extensions, rather
than matrix entries outside the finite label sets. The coordinate
construction ensures that it is continuous, positive semidefinite in
$x,y$ for each $Q$, and independent of the choice of $\gamma_Q$ and of the ordering used to identify
$Q$ with $R_Q$.

\subsection{Degree-zero coordinate reduction}\label{sec:compression}
The third reduction concerns only degree zero. The matrices above keep
a separate coordinate for each reference point, but the union inequalities
use these coordinates only through certain sums. Since $P_0=1$, these
sums can be identified directly from $\cB_kT(S)$.

Fix a nonempty reference set $R$ of size $m$ and put
$q_R=|U_R^{\mathrm{gen}}|$. Write $\rho_i=G_Re_i$ for its reference
labels and $M=M_{R,0}$. For $Q\subseteq S$ of this reference type,
choose an ordering identifying $Q$ with $R$, before averaging.
The degree-zero part of the inner sum in~\eqref{eq:B} has the
following three possible forms:
\begin{equation}\label{eq:compression-cases}
\begin{array}{c|c}
 S\setminus Q & \text{degree-zero contribution}\\[2pt]
 \hline
 \varnothing & \displaystyle\sum_{i,j=1}^m M_{\rho_i,\rho_j}\\[6pt]
 \{x\} & \displaystyle M_{u,u}+2\sum_{i=1}^m M_{\rho_i,u}\\[6pt]
 \{x,y\} & 2M_{u,v}
\end{array}
\end{equation}
In the second row $u=A_Q^Tx$; in the third row
$u=A_Q^Tx$ and $v=A_Q^Ty$. The points in the third row are distinct,
but their labels may coincide, in which case its contribution is
$2M_{u,u}$. If $|S\setminus Q|>2$, there is no contribution.
The first row sums all ordered pairs in $Q^2$. In the second row,
the allowed pairs are $(x,x)$ and the two orders of $x$ with each
point of $Q$. The third row allows only $(x,y),(y,x)$. This proves the
table and accounts for every allowed repeated point.

The table uses the reference labels only through their total sums.
We therefore keep one combined reference index $0$ and one index for
each outside label, with entries
\[
 \widehat M_{0,0}=\sum_{i,j=1}^m M_{\rho_i,\rho_j},\qquad
 \widehat M_{0,u}=\sum_{i=1}^m M_{\rho_i,u},\qquad
 \widehat M_{u,v}=M_{u,v}\quad(u,v\in U_R^{\mathrm{gen}}).
\]
The three rows of~\eqref{eq:compression-cases} then become,
respectively,
\[
 \widehat M_{0,0},\qquad
 \widehat M_{u,u}+2\widehat M_{0,u},\qquad
 2\widehat M_{u,v}.
\]
To form these sums by matrix multiplication, order the reference
labels first and define
\begin{equation}\label{eq:compression-maps}
 C_R=\begin{pmatrix}\mathbf1_m&0\\0&I_{q_R}\end{pmatrix},\qquad
 D_R=C_R^TC_R=\operatorname{diag}(m,I_{q_R}),\qquad
 J_R=C_RD_R^{-1}.
\end{equation}
Here $C_R,J_R\in\R^{(m+q_R)\times(q_R+1)}$ and
$D_R\in\R^{(q_R+1)\times(q_R+1)}$.
The column $\mathbf1_m$ consists of $m$ ones, so the product
$C_R^TMC_R$ forms the sums above. The matrix $J_R$ divides that reference
column by $m$, giving $C_R^TJ_R=J_R^TC_R=I_{q_R+1}$.
This provides a way to recover a full PSD matrix from the smaller one.
All entries of these maps are rational. The formulas also apply when
$q_R=0$. For $m=0$, retain the original scalar block.

\begin{proposition}\label{prop:compression}
For each nonempty $R$, replace $M_{R,0}$ by
\[
 \widehat M_{R,0}=C_R^TM_{R,0}C_R.
\]
Every PSD choice of the original blocks gives PSD compressed blocks.
Conversely, every PSD choice of the compressed blocks has the PSD lift
\begin{equation}\label{eq:compression-lift}
 M_{R,0}=J_R\widehat M_{R,0}J_R^T.
\end{equation}
Both operations preserve every value $\cB_kT(S)$ after reference
averaging, with all positive-degree blocks unchanged. Consequently,
the degree-zero block may have order $q_R+1\le s^m+1$, with exactly
the same feasible objective values in the reduced SDP formulation.
\end{proposition}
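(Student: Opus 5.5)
The argument has three parts: positivity in both directions, preservation of every union value $\cB_kT(S)$, and the resulting equality of feasible objective values.

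\emph{Positivity.} For the compression, $\widehat M_{R,0}=C_R^TM_{R,0}C_R$ is a congruence, so $c^T\widehat M c=(C_Rc)^TM(C_Rc)\ge0$ and the compressed block is PSD. For the lift, $J_R\widehat M_{R,0}J_R^T$ is likewise a congruence of a PSD matrix, hence PSD. Since $C_R^TJ_R=J_R^TC_R=I_{q_R+1}$, compressing the lift gives back $\widehat M_{R,0}$:
\[
 C_R^T J_R\widehat M J_R^T C_R=\widehat M .
\]
The positive-degree blocks are indexed only by $U_R^{\mathrm{gen}}$ and are left untouched by both operations.

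\emph{Preservation of $\cB_kT(S)$.} Expand $\cB_kT(S)$ by~\eqref{eq:B} into a sum over $Q\subseteq S$. Each term is a degree-zero contribution plus positive-degree contributions. The positive-degree contributions do not change, so only the degree-zero part matters. By the table~\eqref{eq:compression-cases}, for each $Q$ this part is one of three expressions. The reference labels enter these expressions only through
\[
 \sum_{i,j}M_{\rho_i,\rho_j}=\widehat M_{0,0},\qquad \sum_i M_{\rho_i,u}=\widehat M_{0,u},
\]
and the remaining entries are $M_{u,v}=\widehat M_{u,v}$. So each contribution is a linear functional of $M$ that factors through $M\mapsto C_R^TMC_R$.

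The step I expect to be the main obstacle is reference averaging. The table was computed for a fixed ordering identifying $Q$ with $R$, but $T$ is built from the averaged matrices $\widetilde M_{R,0}$. I will handle this by checking that compression commutes with averaging. The label permutation $\Pi_\sigma$ permutes the reference labels $\rho_i=G_Re_i$ among themselves, since
\[
 P_\sigma^TG_Re_i=G_RP_\sigma^Te_i=\rho_{\sigma^{-1}(i)},
\]
and it permutes $U_R^{\mathrm{gen}}$ separately. Hence $\Pi_\sigma C_R=C_R\widehat\Pi_\sigma$, where $\widehat\Pi_\sigma$ fixes index $0$ and permutes the outside labels. This gives $C_R^T\widetilde M C_R=\widetilde{\widehat M}$, where $\widetilde{\widehat M}$ is the corresponding average of $\widehat M$. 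In particular, the reference sums are invariant under averaging, because $\sum_{i,j}$ and $\sum_i$ are symmetric in the reference indices. So the contributions computed from the averaged full matrix equal those computed from the averaged compressed matrix.

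\emph{Conclusion.} Starting from a feasible full solution, compress it. Conversely, starting from a feasible compressed solution, lift it to $J_R\widehat M J_R^T$; this lift is PSD, and by the previous two parts it yields the same $\cB_kT(S)$ values. Both directions keep all constraints and the same $\lambda$. The lifted matrices then define a genuine $T$ via Theorem~\ref{thm:framework-reduction} and Proposition~\ref{prop:framework-symmetry}, so the feasible objective values coincide. The order bound follows from $q_R\le s^m$.
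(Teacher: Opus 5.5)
Your proposal is correct and follows essentially the same route as the paper: congruence for positivity in both directions, $C_R^TJ_R=I$ to recover $\widehat M$ from the lift, the table~\eqref{eq:compression-cases} to show the union values factor through $C_R^TMC_R$, and $\Pi_\sigma C_R=C_R\widehat\Pi_\sigma$ to commute compression with reference averaging. Your identity $P_\sigma^TG_Re_i=\rho_{\sigma^{-1}(i)}$ makes explicit the fact, which the paper only asserts, that reference permutations fix the column $\mathbf 1_m$. The paper's additional relation $\Pi_\sigma J_R=J_R\widehat\Pi_\sigma$ is not needed for your argument.
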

\begin{proof}
By multiplication with $C_R$, the entries of $\widehat M=C_R^TMC_R$
are precisely the sums used in~\eqref{eq:compression-cases}.
The preceding count therefore proves that each contribution,
and then their sum over $Q$, is preserved. This argument applies to
each reference ordering separately, so it also applies to their
average. More explicitly, reference permutations leave the column
$\mathbf1_m$ fixed and only permute coordinates for labels outside $R$. If
$\widehat\Pi_\sigma$ denotes this induced permutation, then
$\Pi_\sigma C_R=C_R\widehat\Pi_\sigma$ and
$\Pi_\sigma J_R=J_R\widehat\Pi_\sigma$. Thus compression and lifting
commute with reference averaging.

Both compression and lifting preserve positivity: a quadratic form
in either resulting matrix is a quadratic form in its original PSD matrix.
This proves positivity in both directions, and $C_R^TJ_R=I$ shows that
compressing the proposed lift returns $\widehat M$.
Theorem~\ref{thm:framework-reduction} extends the
lifted matrices to a function in $\operatorname{PD}(S^{n-1},R)$.
The lift therefore defines an admissible dual function, and the equalities
above preserve all its required union values.
\end{proof}

This proves that compression preserves the assembled inequalities
and their feasible objective values. The individual values $T(x,y,Q)$
may change when the reference coordinates are combined. They need not
be recovered separately to prove the bound. We next address the
additional strict matrix margins used for numerical verification.

For numerical verification we also want a strictly positive matrix
margin. Compressing a full margin $\varepsilon I$ gives
$C_R^T(\varepsilon I)C_R=\varepsilon D_R$, which explains why the
smaller matrix must be tested against $\varepsilon D_R$.

The basic lift $J_R\widehat M_{R,0}J_R^T$ has zero quadratic form on
vectors supported on the reference coordinates whose sum is zero.
To make the full matrix strictly positive, we add a positive term
on precisely these unused directions. Put
\begin{equation}\label{eq:compression-projector}
 P_R^{\perp}=I-C_RD_R^{-1}C_R^T.
\end{equation}
This matrix subtracts the mean from the reference coordinates and
sets the outside coordinates to zero. It is the orthogonal projector
onto $\ker C_R^T$: it is symmetric, its square equals itself, and
$C_R^TP_R^{\perp}=0$. Adding a multiple of it changes none of the
three sums in~\eqref{eq:compression-cases}.

\begin{lemma}\label{lem:compression-margin}
Let $\varepsilon>0$. If $M_{R,0}-\varepsilon I\succ0$, then
$\widehat M_{R,0}-\varepsilon D_R\succ0$.
Conversely, if $\widehat M_{R,0}-\varepsilon D_R\succ0$, then, for
any $\tau>0$, the lift
\begin{equation}\label{eq:compression-strict-lift}
 M_{R,0}=J_R\widehat M_{R,0}J_R^T+
                 (\varepsilon+\tau)P_R^{\perp}
\end{equation}
satisfies $M_{R,0}-\varepsilon I\succ0$ and gives the same union values.
\end{lemma}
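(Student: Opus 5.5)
Both directions will rest on the two algebraic identities already available: $C_R^TJ_R=I$ and $C_R^TP_R^{\perp}=0$. Together with the fact that $P_R^{\perp}$ is an orthogonal projector, these reduce each claim to a quadratic-form inequality.

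\emph{Forward direction.} We compute
\[
\widehat M_{R,0}-\varepsilon D_R=C_R^T(M_{R,0}-\varepsilon I)C_R .
\]
For $c\neq0$, the quadratic form equals $(C_Rc)^T(M_{R,0}-\varepsilon I)(C_Rc)$. Since $C_R$ has full column rank, $C_Rc\ne0$, and the form is strictly positive. Full column rank is clear: its columns are $\mathbf1_m$ on the reference block and the identity on the outside block, so they have disjoint supports and are nonzero.

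\emph{Converse direction.} Put $\Pi=C_RD_R^{-1}C_R^T$. This is the orthogonal projector onto the column space of $C_R$, and $P_R^{\perp}=I-\Pi$. Then $I=\Pi+P_R^{\perp}$, so
\[
M_{R,0}-\varepsilon I=\bigl(J_R\widehat M_{R,0}J_R^T-\varepsilon\Pi\bigr)+\tau P_R^{\perp}.
\]
Next I would rewrite $\varepsilon\Pi=J_R(\varepsilon D_R)J_R^T$, which holds because $J_RD_RJ_R^T=C_RD_R^{-1}D_RD_R^{-1}C_R^T=\Pi$. The first bracket therefore becomes $J_R(\widehat M_{R,0}-\varepsilon D_R)J_R^T$.

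Now take $x\ne0$ and decompose $x=\Pi x+P_R^{\perp}x$. The vector $J_R^Tx=D_R^{-1}C_R^Tx$ depends only on $\Pi x$, and it is nonzero when $\Pi x\neq0$. The first term is PSD, and it is strictly positive on $x$ unless $C_R^Tx=0$. In that case $x=P_R^{\perp}x\ne0$, and the term $\tau\|P_R^{\perp}x\|^2$ is positive. In every case the sum is strictly positive, so $M_{R,0}-\varepsilon I\succ0$.

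\emph{Union values.} These reduce to Proposition~\ref{prop:compression}. Compressing the lift gives
\[
C_R^TM_{R,0}C_R=\widehat M_{R,0}+(\varepsilon+\tau)C_R^TP_R^{\perp}C_R=\widehat M_{R,0},
\]
since $C_R^TP_R^{\perp}=0$. The table~\eqref{eq:compression-cases} involves $M_{R,0}$ only through these compressed entries, so every $\cB_kT(S)$ is unchanged.

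\emph{Reference averaging.} It must also be checked that averaging does not interfere with either the margin or the union values. By Proposition~\ref{prop:framework-symmetry}, a margin $\varepsilon I$ survives averaging. For the union values, I would note that $\Pi_\sigma P_R^{\perp}\Pi_\sigma^T=P_R^{\perp}$, because $\Pi_\sigma C_R=C_R\widehat\Pi_\sigma$ and $\widehat\Pi_\sigma$ fixes $D_R$. With this, the commutation argument from Proposition~\ref{prop:compression} still applies.

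The only step with real content is the strictness in the converse. It depends on splitting $x$ along $\operatorname{im}C_R$ and $\ker C_R^T$ and checking that the $\varepsilon$-margin matches the compressed test exactly. The identity $J_RD_RJ_R^T=\Pi$ is what makes this bookkeeping close up.
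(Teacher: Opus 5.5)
Your proposal is correct and follows the paper's proof essentially step for step. The forward direction uses $\widehat M_{R,0}-\varepsilon D_R=C_R^T(M_{R,0}-\varepsilon I)C_R$ together with full column rank, the converse uses $J_RD_RJ_R^T=I-P_R^{\perp}$ and a splitting of vectors along $\operatorname{im}C_R$ and $\ker C_R^T$, and the union values follow from $C_R^TP_R^{\perp}C_R=0$. The paper writes the splitting as $z=C_Rw+r$ and reads off the form $w^T(\widehat M_{R,0}-\varepsilon D_R)w+\tau\|r\|^2$ directly, which is your case analysis in one line. Your extra paragraph on reference averaging is harmless but not needed, because the compression identities already hold for each reference ordering separately.
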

\begin{proof}
The first assertion follows from
$\widehat M_{R,0}-\varepsilon D_R
 =C_R^T(M_{R,0}-\varepsilon I)C_R$ and the full column rank of $C_R$.
For the converse, $J_RD_RJ_R^T=I-P_R^{\perp}$ gives
\[
 M_{R,0}-\varepsilon I
 =J_R(\widehat M_{R,0}-\varepsilon D_R)J_R^T+\tau P_R^{\perp}.
\]
Every vector has a unique decomposition $z=C_Rw+r$ with
$C_R^Tr=0$. Its quadratic form in this expression is
$w^T(\widehat M_{R,0}-\varepsilon D_R)w+\tau\|r\|^2$,
which is positive for $z\ne0$. Finally,
$C_R^TP_R^{\perp}C_R=0$, so the added term changes none of the
three contributions in~\eqref{eq:compression-cases}.
\end{proof}

\subsection{The final SDP}\label{sec:finite-sdp}
After the three reductions, the optimization problem can be written
directly in terms of the resulting matrix variables. The notation used
in the preceding steps is summarized below.
\begin{center}
\begin{tabular}{@{}ll@{}}
\toprule
Notation & Role\\
\midrule
$M_{R,l}$ & Coefficient matrix before compression\\
$\widetilde M_{R,l}$ & Average over reference permutations\\
$\widehat M_{R,0}$ & Compressed degree-zero matrix\\
$H_{R,l}$ & Matrix variable in the final SDP\\
\bottomrule
\end{tabular}
\end{center}
For $R=\varnothing$, set $\widehat M_{R,0}=M_{R,0}$; no
compression is needed for this scalar block.
The unknowns are a real number $\alpha$ and the following symmetric
matrices, one for each reference type $R\in\mathcal R_{k-2}$:
\[
 H_{R,0}=\widehat M_{R,0},\qquad
 H_{R,l}=M_{R,l}\quad(1\le l\le d).
\]
For $m=|R|\ge1$, their index sets are
$\{0\}\cup U_R^{\mathrm{gen}}$ in degree zero and
$U_R^{\mathrm{gen}}$ in positive degrees. Their orders are therefore
$q_R+1$ and $q_R$, where $q_R=|U_R^{\mathrm{gen}}|$.
Here $0$ denotes the combined reference index. It is separate from
every label in $U_R^{\mathrm{gen}}$, including a label whose coordinates
are all zero.
When $R=\varnothing$, every degree has one scalar variable.
For two inner products, the orders are at most $2^m+1$ and $2^m$.
The parameters $n,\Lambda,k,d$, all Gram types and all coefficient
matrices defined below are fixed before optimization begins.

Here is an explicit way to form the coefficient matrices. On any one
of the index sets above, let $e_u$ be its coordinate vector and put
\[
 E_{u,v}=\tfrac12(e_ue_v^T+e_ve_u^T),\qquad
 \langle A,H\rangle=\operatorname{tr}(AH).
\]
Then $\langle E_{u,v},H\rangle=H_{u,v}$, including when $u=v$.
Thus an off-diagonal entry of $E_{u,v}$ is $1/2$, whereas
$E_{u,u}$ has a single diagonal entry $1$.

Take a realizable Gram type $G_S=(t_{ij})_{i,j=1}^N$, with
$1\le N\le k$, and use $S=\{1,\ldots,N\}$ as its point indices.
For each reference type $R$ of size $m$, let $\mathcal I_R(S)$ be
the ordered tuples $q=(q_1,\ldots,q_m)$ of distinct indices
such that
\[
 (t_{q_iq_j})_{i,j=1}^m=G_R.
\]
Recall that $S_R$ is the group of reference permutations preserving
$G_R$. Each reference subset of type $R$ occurs exactly $|S_R|$
times in this list. For one such tuple put
$Q=\{q_1,\ldots,q_m\}$ and
$u_x=(t_{q_1x},\ldots,t_{q_mx})^T$ for $x\in S\setminus Q$.
For $m\ge1$, define matrices $Z_{S,q,l}$ by the following table;
each $E_{u,v}$ uses the index set of the indicated degree:
\[
\begin{array}{c|c|c}
 S\setminus Q & Z_{S,q,0} & Z_{S,q,l}\ (l>0)\\[2pt]
 \hline
 \varnothing & E_{0,0} & 0\\[3pt]
 \{x\} & E_{u_x,u_x}+2E_{0,u_x}
   & P_l^{n,m}(G_R;u_x,u_x,1)E_{u_x,u_x}\\[3pt]
 \{x,y\} & 2E_{u_x,u_y}
   & 2P_l^{n,m}(G_R;u_x,u_y,t_{xy})E_{u_x,u_y}\\[3pt]
 |S\setminus Q|>2 & 0 & 0
\end{array}
\]
In the third row, $x$ and $y$ are distinct, but their labels may be
equal; the coefficient remains $2$ in that case. The degree-zero
column is the compression table~\eqref{eq:compression-cases} written
as matrix inner products. For positive degrees, every term involving
a reference point vanishes. The second row then retains only
$(x,x)$, and the third retains the two orders of $(x,y)$.

For $R=\varnothing$, the list $\mathcal I_R(S)$ contains the empty
tuple once and $|S_R|=1$. All coefficient matrices are scalars:
\[
 Z_{S,(),l}=\begin{cases}
 1,&N=1,\\
 2P_l^n(t_{12}),&N=2,\\
 0,&N\ge3,
 \end{cases}
 \qquad 0\le l\le d.
\]
The fixed coefficient matrices for the union inequality are now
\[
 A_{S,R,l}=\frac1{|S_R|}
       \sum_{q\in\mathcal I_R(S)}Z_{S,q,l}.
\]
If $m>N$, the sum is empty and its value is the zero matrix.
Summing over all reference orderings and dividing by $|S_R|$ gives
exactly the reference average for each subset $Q$; it introduces no
additional multiplicity. Thus reference averaging is built into the
fixed coefficient matrices $A_{S,R,l}$. The unknown matrices $H_{R,l}$
need not satisfy invariance equations.

Fix $n$, a finite $\Lambda\subset[-1,1)$, $2\le k\le n$ and $d\ge0$,
under the reference-independence assumption above. The final SDP is
\begin{equation}\label{eq:dual}
\begin{aligned}
 \text{minimize}\quad&\alpha,\\
 \text{subject to}\quad
 &\sum_{R\in\mathcal R_{k-2}}\sum_{l=0}^d
       \langle A_{S,R,l},H_{R,l}\rangle
       \le\begin{cases}
              \alpha-1,&|S|=1,\\
              -2,&|S|=2,\\
              0,&3\le|S|\le k,
            \end{cases}\\
 &H_{R,l}\succeq0
       \quad(R\in\mathcal R_{k-2},\ 0\le l\le d),\\
 &\alpha\ge0.
\end{aligned}
\end{equation}
There is one linear inequality for every realizable Gram type $S$,
up to simultaneous row and column permutation. Only $\alpha$ and
the entries of the matrices $H_{R,l}$ are optimized.

To connect this matrix problem with the earlier function bound,
lift $H_{R,0}$ by~\eqref{eq:compression-lift}, interpolate the
coefficient matrices, and take the reference average. The table
above and~\eqref{eq:assembled-kernel} then give
\[
 \sum_{R\in\mathcal R_{k-2}}\sum_{l=0}^d
       \langle A_{S,R,l},H_{R,l}\rangle=\cB_kT(S).
\]
Consequently, every feasible solution of~\eqref{eq:dual} gives
$|C|\le\alpha$ by Proposition~\ref{prop:certificate}.
This also shows that the explicit matrix problem has the same feasible
objective values as the preceding compressed formulation.
We keep the singleton inequality rather than replacing it by an equality;
strictly feasible certificates need not attain the minimum.
It already implies $\alpha\ge1$, so $\alpha\ge0$ is redundant.

For the twelve applications in Section~\ref{sec:application},
Table~\ref{tab:blocks} gives the matrix orders before and after
compression. At six points, reference sets have at most four points,
so the largest degree-zero block decreases from order $20$ to $17$.

\begin{table}[ht]
\centering\small
\begin{tabular}{@{}rrrrr@{}}
\toprule
$m$ & Reference types & Full degree zero & Compressed degree zero & Positive degrees\\
\midrule
0 & 1 & 1 & 1 & 1\\
1 & 1 & 3 & 3 & 2\\
2 & 2 & 6 & 5 & 4\\
3 & 4 & 11 & 9 & 8\\
4 & 11 & 20 & 17 & 16\\
\bottomrule
\end{tabular}
\caption{Coefficient matrix orders for the twelve six-point problems.
The full degree-zero column is before the degree-zero coordinate
reduction; the compressed column is the final degree-zero block. There
is one block at each degree $0,\ldots,5$ for each reference type.}
\label{tab:blocks}
\end{table}

\section{Applications to strongly regular graphs}\label{sec:application}
We now apply the SDP of Section~\ref{sec:framework} to strongly regular
graphs. The spherical embedding in the next subsection is general. We then
specialize to the twelve parameter sets in Koolen--Gebremichel~\cite[Table~1]{KG},
check that the six-point models used for those cases are complete and
well defined, and finally state the certified bounds.

\subsection{From graph parameters to the spherical problem}\label{sec:spherical-reduction}
Let a primitive strongly regular graph have parameters
$(v,k,\lambda,\mu)$; here $k$ denotes valency. We give the spectral
calculation, following the spherical embedding described in
Brouwer and Van Maldeghem~\cite{BVM}.
If $A$ is the adjacency matrix, counting walks of length two gives
\[
A^2=(k-\mu)I+(\lambda-\mu)A+\mu J,
\]
where $J$ is the all-ones matrix. On the space perpendicular to the
all-ones vector, $J$ vanishes. The two restricted eigenvalues $r>s$
are the roots of $z^2-(\lambda-\mu)z-(k-\mu)=0$.
Their multiplicities $n,g$ satisfy $n+g=v-1$ and
$k+nr+gs=0$, the latter because $\tr A=0$.
The orthogonal projector onto the $r$-eigenspace is
\[
E=\frac{A-sI-(k-s)J/v}{r-s}.
\]
It acts as the identity on that eigenspace and as zero on the
other eigenspace and on the all-ones vector. Its diagonal is
constant and its trace is $n$, so $E_{ii}=n/v$.
Let $e_i$ be the $i$th standard basis vector of $\mathbb R^v$.
Then $p_i=\sqrt{v/n}\,Ee_i$ are unit vectors in an
$n$-dimensional space, with $p_i\cdot p_j=(v/n)E_{ij}$.

Let $c_1,c_2$ be the off-diagonal entries of $E$ on edges and
nonedges. From the diagonal of $AE=rE$ and the row sums of $EJ=0$,
\[
kc_1=rn/v,\qquad n/v+kc_1+(v-k-1)c_2=0.
\]
Consequently,
\begin{equation}\label{eq:embedding}
p_i\cdot p_j=
\begin{cases}
b=r/k,&i\sim j,\\
a=-(r+1)/(v-k-1),&i\not\sim j,\ i\ne j.
\end{cases}
\end{equation}
The complement has adjacency matrix $J-I-A$ and acts on the same
eigenspace with eigenvalue $-1-r$. Since the graph and its complement
are connected, their nonprincipal eigenvalues are strictly smaller
than their valencies. Thus $r<k$ and $-1-r<v-k-1$, so $b<1$ and
$a<1$. The vectors $p_i$ are therefore distinct, and the graph gives
a spherical two-distance set in $S^{n-1}$ with one point for each vertex.
The complement has the same projector $E$ and therefore the same
spherical embedding. Its parameters are
\[
(v,v-k-1,v-2k+\mu-2,v-2k+\lambda).
\]

For the twelve parameter sets considered below, both $\mu$ and the
complementary value $v-2k+\lambda$ are positive, so the graph and its
complement would be connected. In all twelve cases the two inner products
from~\eqref{eq:embedding} satisfy $a<b<1$, so different vertices give
distinct unit vectors. Table~\ref{tab:embedding} records the spherical
data for the four parameter sets that will be excluded.

\begin{table}[ht]
\centering\small
\begin{tabular}{@{}lrrrc@{}}
\toprule
Parameters & $r$ & $n$ & $a$ & $b$\\
\midrule
$(351,140,73,44)$ & $32$ & $26$ & $-11/70$ & $8/35$\\
$(550,162,75,36)$ & $42$ & $33$ & $-1/9$ & $7/27$\\
$(703,182,81,35)$ & $49$ & $37$ & $-5/52$ & $7/26$\\
$(1344,221,88,26)$ & $65$ & $56$ & $-1/17$ & $5/17$\\
\bottomrule
\end{tabular}
\caption{Data for the spherical embeddings required by the four
excluded parameter sets. Here $r$ has multiplicity $n$; the other
restricted eigenvalue is $-3$.}
\label{tab:embedding}
\end{table}

\paragraph{Relation with the cubic Krein condition.}
For completeness, take either restricted eigenvalue $\theta$ with
multiplicity $n$, and put $h=v-k-1$. Then
$b=\theta/k$, $a=-(\theta+1)/h$ and $1+kb+ha=0$. Since
$P_3^n(t)=((n+2)t^3-3t)/(n-1)$,
\[
1+kP_3^n(b)+hP_3^n(a)
=\frac{n+2}{n-1}
\left(1+\frac{\theta^3}{k^2}-\frac{(\theta+1)^3}{h^2}\right).
\]
The expression in parentheses is the corresponding Krein condition
written as a nonnegative quantity; see~\cite[Section~1.3.4]{BVM}.
This identifies the cubic condition for the fixed pair counts of the
embedding; it makes no assertion about higher degrees of the spherical LP.

From this point on, we specialize to the twelve parameter sets listed
in Table~\ref{tab:computed}.

\subsection{Checking the twelve SDP models}\label{sec:twelve-models}
To apply Section~\ref{sec:framework} through six points, two things must
be checked for these twelve spherical problems. First, every reference
Gram matrix of order at most four must be positive definite, so that the
inverses in the projection formulas are well defined. Second, the
feasible labels and Gram types used to form the constraints must be
completely enumerated. The first point follows from a uniform estimate;
the second is a finite exact computation.

Return to $k$ as the multipoint level. All twelve parameter sets satisfy
$\beta=\max_{t\in\Lambda}|t|\le5/17$. For $k\le6$, each reference
set has $m\le4$ points. For every $z\in\mathbb R^m$, its Gram matrix obeys
\[
z^TG_Rz\ge\sum_i z_i^2-2\beta\sum_{i<j}|z_iz_j|
\ge(1-(m-1)\beta)\|z\|^2
\ge\frac2{17}\|z\|^2,
\]
using $2|z_iz_j|\le z_i^2+z_j^2$. Thus every reference Gram matrix
is positive definite. Also $n-m\ge22$, so all projection and
Gegenbauer formulas in Sections~\ref{sec:coordinates}--\ref{sec:framework}
apply.

Exact enumeration gives the remaining completeness information.
All Gram patterns on at most five points are positive definite in each
of the twelve problems. In particular every generic label in
$\Lambda^m$ is feasible, and
\begin{equation}\label{eq:labels}
U_R=\Lambda^m\cup\{G_Re_1,\ldots,G_Re_m\},\qquad
U_R^{\mathrm{gen}}=\Lambda^m.
\end{equation}
Since $|\Lambda|=2$, we have $q_R=|U_R^{\mathrm{gen}}|=2^m$ for
$m\le4$. Hence the block orders in Table~\ref{tab:blocks} are exact
for these twelve problems. For nonempty reference sets, the orders are
$2^m+1$ in degree zero after compression and $2^m$ in positive degrees.
For the empty reference set, each block has order one.

At six points all $156$ Gram types are PSD, but one has rank five.
It consists of two triples, with inner product $a$ within each triple
and $b$ between them. All twelve parameter sets satisfy $3b=1+2a$;
the eigenvalues of this Gram matrix are $1-a$ with multiplicity four,
$2(1+2a)$, and $0$. This singular configuration is retained in every
six-point SDP. No singular matrix is inverted, because the inverses in
Sections~\ref{sec:coordinates}--\ref{sec:framework} involve only
reference Gram matrices with at most four points.

The estimate above is an analytic proof of reference independence.
The Gram-type and label enumeration is carried out in exact arithmetic;
Section~\ref{sec:workflow} explains how the model builder reproduces
these lists. This checks completeness of the SDP models. The exact
feasibility of the computed certificates is a separate check described
in the same section.

\subsection{Certified bounds and graph exclusions}
Table~\ref{tab:computed} gives the bounds obtained from~\eqref{eq:dual}
for all twelve parameter sets in Koolen--Gebremichel~\cite[Table~1]{KG}.
The dimension $n$ is the multiplicity of the positive restricted
eigenvalue, and the two inner products are given by~\eqref{eq:embedding}.
Every column uses Gegenbauer degrees $0,\ldots,5$ ($d=5$).
The table reports these degree-five bounds alone, without taking their
minimum with the classical absolute or Musin bounds. Each entry is
$\lfloor\alpha\rfloor$, where feasibility was checked in exact rational
arithmetic. These are proved upper bounds; the SDP optimum may be smaller.

\begin{table}[ht]
\centering\small
\setlength{\tabcolsep}{5pt}
\begin{tabular}{@{}lrrrrrr@{}}
\toprule
Parameters & $n$ & LP ($k=2$) & $k=3$ & $k=4$ & $k=5$ & $k=6$\\
\midrule
$(288,105,52,30)$ & $27$ & $433$ & $407$ & $389$ & $379$ & $371$\\
$(300,117,60,36)$ & $26$ & $381$ & $368$ & $358$ & $352$ & $348$\\
$(351,140,73,44)$ & $26$ & $377$ & $365$ & $357$ & $352$ & \textbf{348}\\
$(375,102,45,21)$ & $34$ & $1020$ & $759$ & $648$ & $576$ & $526$\\
$(405,132,63,33)$ & $30$ & $616$ & $530$ & $486$ & $463$ & $440$\\
$(441,88,35,13)$ & $44$ & $1579$ & $1546$ & $1123$ & $940$ & $805$\\
$(476,133,60,28)$ & $34$ & $1026$ & $750$ & $643$ & $576$ & $527$\\
$(540,147,66,30)$ & $35$ & $1062$ & $812$ & $683$ & $605$ & $548$\\
$(550,162,75,36)$ & $33$ & $906$ & $684$ & $596$ & \textbf{546} & \textbf{506}\\
$(575,112,45,16)$ & $46$ & $1740$ & $1682$ & $1247$ & $1057$ & $897$\\
$(703,182,81,35)$ & $37$ & $1147$ & $950$ & $770$ & \textbf{662} & \textbf{592}\\
$(1344,221,88,26)$ & $56$ & $2890$ & $2511$ & $1787$ & $1585$ & \textbf{1307}\\
\bottomrule
\end{tabular}
\caption{Certified cardinality bounds with $d=5$ in every column,
including the LP column. No minimum with the classical bounds is taken.
Bold entries are below the number of vertices required by the graph.}
\label{tab:computed}
\end{table}

For $k=2$, only the empty reference set occurs. Each coefficient
matrix is a nonnegative scalar $c_l$, and~\eqref{eq:dual} reduces to
\[
\min\left\{1+\sum_{l=0}^5c_l:
c_l\ge0,\quad\sum_{l=0}^5c_lP_l^n(t)\le-1\quad(t\in\Lambda)\right\}.
\]
The factor $-1$ comes from dividing the pair inequality by two, since
selected pairs are ordered. Rational primal and dual feasible solutions
with equal objective values certify each degree-five LP optimum; their
integer parts agree with the displayed LP column. The bounds decrease
along every row. At five points they exclude the $550$- and
$703$-vertex parameters; at six points they also exclude the $351$-
and $1344$-vertex parameters.

The four six-point certificates give the following strict comparisons:
\[
\begin{array}{c|rrrr}
v&351&550&703&1344\\ \hline
\alpha<&348.872969&506.100183&592.819146&1307.387230.
\end{array}
\]
Each terminating decimal is an exact rational number chosen above the
saved value of $\alpha$.

\begin{theorem}\label{thm:spherical}
For the pairs $(n,\{a,b\})$ in Table~\ref{tab:embedding}, every
spherical $\{a,b\}$-code has at most $348$, $506$, $592$ and
$1307$ points, respectively.
\end{theorem}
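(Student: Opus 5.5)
The bound will follow from Proposition~\ref{prop:certificate} applied to one explicit feasible point of the six-point SDP~\eqref{eq:dual} for each of the four pairs $(n,\{a,b\})$. Concretely, for each case I fix $k=6$ and $d=5$, and take $\Lambda=\{a,b\}$ from Table~\ref{tab:embedding}. The applicability hypotheses of Section~\ref{sec:framework} are already checked in Section~\ref{sec:twelve-models}: $2\le k\le n$, every reference configuration with at most four points is linearly independent (the uniform estimate with $\beta\le5/17$), $n-m\ge22$, and the label sets are as in~\eqref{eq:labels}. It then remains to exhibit rational data $\alpha$ and $H_{R,l}$ and to verify feasibility exactly. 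The point of feasibility is that, after the degree-zero lift~\eqref{eq:compression-lift} (or~\eqref{eq:compression-strict-lift}), interpolation by Theorem~\ref{thm:framework-reduction}, and reference averaging (Proposition~\ref{prop:framework-symmetry}), these data define a continuous PSD function $T$ with $\sum_{R,l}\langle A_{S,R,l},H_{R,l}\rangle=\cB_6T(S)$. Hence $(\alpha-1,T)$ is feasible in~\eqref{eq:delaat4}, and $|C|\le\alpha$ follows. Since $|C|$ is an integer and $\alpha$ is strictly below $348.872969$, $506.100183$, $592.819146$ and $1307.387230$ respectively, we obtain the stated bounds $348,506,592,1307$.

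The steps, in order, are these. First, construct the fixed coefficient matrices exactly. This means enumerating all realizable Gram types on at most six points with off-diagonal entries in $\Lambda$ ($156$ at six points, including the rank-five type). It also means fixing one representative $R$ for each of the reference types in Table~\ref{tab:blocks}, listing the tuples $\mathcal I_R(S)$, and evaluating the rational scalars $P_l^{n,m}(G_R;u,v,t)$ through~\eqref{eq:homogeneous-recurrence}. That recurrence involves only $h_u,h_v,w$, which are rational in the Gram and label entries, so no square roots ever appear. Second, solve the SDP numerically in floating point. Third, round the solution to rational matrices $H_{R,l}$ and choose a rational $\alpha$ slightly above the numerical value.

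Exact verification requires two things: every linear inequality in~\eqref{eq:dual} holds in rational arithmetic, and every block is PSD. I would certify the PSD condition by an exact $LDL^T$ (or Cholesky-type) factorization showing $H_{R,0}-\varepsilon D_R\succ0$ and $H_{R,l}-\varepsilon I\succ0$ for a small rational $\varepsilon>0$, using Lemma~\ref{lem:compression-margin} to justify the compressed form.

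The main obstacle is the rounding. A raw rounded solution typically violates some of the linear constraints slightly, because the numerical optimum is nearly tight. I would first solve with a strict interior margin $\varepsilon$ (maximizing slack or bounding eigenvalues below by $\varepsilon$) and accept a slightly worse $\alpha$; since the numerical bounds lie well below $v$, there is room for this. After rounding the matrices, I would restore feasibility by lowering the right-hand side effect: the singleton inequalities can always be satisfied by increasing $\alpha$. For the pair and larger-set inequalities, small violations can be absorbed by subtracting a tiny multiple of a positive definite correction from the blocks while the margin $\varepsilon$ keeps them PSD. Alternatively, I would project the rounded point exactly onto the affine constraints and rely on the $\varepsilon$ margin to retain positivity. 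The resulting certificate would finally be rechecked by an independent program.
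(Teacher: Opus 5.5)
Your proposal is correct and is essentially the paper's proof: exact rational certificates for the $k=6$, $d=5$ program~\eqref{eq:dual} are accepted through the tests of Proposition~\ref{prop:exact}. These are exact $LDL^T$ margins against $\varepsilon W_b$, with Lemma~\ref{lem:compression-margin} covering the compressed blocks, and strictly negative residuals over the complete Gram-type list; Proposition~\ref{prop:certificate} and integrality against the displayed strict upper bounds on $\alpha$ then give $348$, $506$, $592$ and $1307$.

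The only difference is how a candidate is produced, which lies outside the logical proof. The paper solves with $1536$-bit SDPA-GMP on shifted variables ($X+2\varepsilon I$, resp.\ $X+2\varepsilon D_R$) with requested residuals at most $-10^{-26}$, so the printed decimals pass the exact tests without repair. Your first repair heuristic, subtracting a positive definite multiple from the blocks, is not guaranteed to lower every violated residual, because positive-degree coefficient matrices $A_{S,R,l}$ can have negative trace; for example, $A_{S,\varnothing,1}=2a<0$ for a pair with inner product $a$.
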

\begin{proof}[Proof of Theorems~\ref{thm:spherical} and~\ref{thm:main}]
Each of the four rational certificates passes
Proposition~\ref{prop:exact} with $k=6,d=5$, using the checks
described in Section~\ref{sec:workflow}. Proposition~\ref{prop:certificate}
bounds the code size by its saved value of $\alpha$. The displayed
comparisons and integrality give $348$, $506$, $592$ and $1307$.
The spherical embeddings would require $351$, $550$, $703$ and $1344$
distinct points. Each exceeds the corresponding bound. The complement
relation proves the complementary exclusions.
\end{proof}

Separate five-point certificates using only degrees $0,1,2$ already
give the bounds $546$ and $662$ for the $550$- and $703$-vertex rows.
Table~\ref{tab:computed} uses $d=5$ throughout so that all columns use
the same degrees. Only $d=5$ is reported for the six-point computations.

\section{Computation and exact verification}\label{sec:workflow}\label{sec:cert}\label{sec:rigorous}
The SDP solver is used only to find candidate matrices. A bound is
proved after a specified rational candidate passes exact tests for the
original rational SDP. We first define the certificate and its acceptance
test, then describe the exact model construction, the high-precision
search, and independent cross-checks.

\subsection{The certificate and its exact verification}\label{sec:exact-certificate}
For a fixed input $(n,\Lambda,k,d)$, a certificate consists of a rational
number $\alpha$ and one symmetric rational matrix for each block
in~\eqref{eq:dual}. In these applications all Gram matrices, labels and
coefficient matrices are rational. The solver's status and approximate
slack variables are not used to establish feasibility.

Number the coefficient blocks by $b$, let $H_b$ denote the matrix in
block $b$, and let $q_b$ be its order. For a nonempty reference set and
degree zero, put $H_b=\widehat M_{R,0}$ and $W_b=D_R$, where
$D_R=\operatorname{diag}(m,I_{q_R})$. For every other block, put
$H_b=M_{R,l}$ and $W_b=I$. Thus $W_b\succ0$ in every case.
Move the right-hand side of each inequality in~\eqref{eq:dual} to the
left and write the exact residual as
\begin{equation}\label{eq:residual}
r_S=c_S-e_S\alpha+
\sum_b\sum_{i\le j}a_{S,b,i,j}(H_b)_{ij},
\end{equation}
where $c_S=1,2,0$ for $|S|=1,2,\ge3$, respectively, and $e_S=1$ only
for singletons and $e_S=0$ otherwise. If block $b$ corresponds to $(R,l)$, then
\[
a_{S,b,i,i}=(A_{S,R,l})_{ii},\qquad
a_{S,b,i,j}=2(A_{S,R,l})_{ij}\quad(i<j).
\]
The factor two combines the symmetric off-diagonal entries in the
matrix inner product. All coefficients in~\eqref{eq:residual} are exact
rational numbers.

The SDP itself only requires PSD blocks and nonpositive residuals. For
verification we use the stronger sufficient conditions below: a positive
definiteness margin for every block and a strictly negative residual for
every inequality.

\begin{proposition}\label{prop:exact}
Fix one of the spherical problems in Table~\ref{tab:computed}, a level
$2\le k\le6$ and an integer $d\ge0$. Suppose a rational candidate has
the complete block and label structure of~\eqref{eq:dual}, $\alpha\ge0$
and symmetric matrices $H_b$. Let $\varepsilon,\eta>0$ be rational. If
\[
H_b-\varepsilon W_b
=L_b\operatorname{diag}(d_{b,1},\ldots,d_{b,q_b})L_b^T,
\qquad d_{b,i}>0,
\]
for every block, with $L_b$ unit lower triangular, and if
$r_S<-\eta$ for every realizable Gram type on $1$--$k$ points, then the
candidate is strictly feasible for~\eqref{eq:dual}. Every spherical
$\Lambda$-code in $\mathbb R^n$ has at most $\lfloor\alpha\rfloor$
points.
\end{proposition}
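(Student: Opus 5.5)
The plan is to turn the two exact tests into the hypotheses of the final SDP~\eqref{eq:dual}, and then apply the chain already built in Section~\ref{sec:framework} together with Proposition~\ref{prop:certificate}.

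\emph{Positivity of the blocks.} The matrices $L_b$ are unit lower triangular, hence invertible, and every $d_{b,i}>0$. So the congruence $H_b-\varepsilon W_b=L_bD_bL_b^T$ gives $H_b-\varepsilon W_b\succ0$. Since $W_b\succ0$ and $\varepsilon>0$, this yields $H_b\succ\varepsilon W_b\succ0$, and in particular $H_b\succeq0$. Degree-zero blocks for nonempty $R$ have $W_b=D_R$, so Lemma~\ref{lem:compression-margin} supplies a lift~\eqref{eq:compression-strict-lift} to a full block with margin $\varepsilon I$. This is only needed to describe the margin; plain feasibility uses Proposition~\ref{prop:compression}.

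\emph{The linear inequalities.} By construction of $a_{S,b,i,j}$, the sum $\sum_{i\le j}a_{S,b,i,j}(H_b)_{ij}$ equals $\langle A_{S,R,l},H_{R,l}\rangle$. The diagonal terms match directly, and the off-diagonal terms appear twice in the trace, which the factor $2$ accounts for. Hence $r_S$ is exactly the left side minus the right side of the inequality in~\eqref{eq:dual} for $S$, so $r_S<-\eta<0$ gives strict satisfaction. The hypothesis says the candidate has the \emph{complete} block and label structure. This refers to the exact enumeration in Section~\ref{sec:twelve-models}, which shows that all Gram types on at most $k\le6$ points are listed, that all reference Gram matrices are positive definite with $m\le4\le n-2$, and that the labels satisfy~\eqref{eq:labels}. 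Because of this, every constraint of~\eqref{eq:dual} is among the tested $r_S$, and the reference-independence and $2\le k\le n$ assumptions of the construction hold. Together with $\alpha\ge0$, the candidate is therefore strictly feasible.

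\emph{The bound.} From the feasible $(\alpha,H)$, lift the degree-zero blocks by~\eqref{eq:compression-lift}, interpolate by Theorem~\ref{thm:framework-reduction}, and average by Proposition~\ref{prop:framework-symmetry}. Then assemble $T$ by~\eqref{eq:T-coordinate-bridge}. The identity $\sum_{R,l}\langle A_{S,R,l},H_{R,l}\rangle=\cB_kT(S)$ stated after~\eqref{eq:dual} shows that $(\alpha-1,T)$ is feasible for~\eqref{eq:delaat4}. Proposition~\ref{prop:certificate} then gives $|C|\le\alpha$, and since $|C|$ is an integer, $|C|\le\lfloor\alpha\rfloor$.

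The main obstacle is the completeness claim: a residual test only covers the Gram types that were actually listed. I would handle this by stating explicitly that the hypothesis includes the exact enumeration of all PSD Gram types of rank at most $n$, including singular ones such as the rank-five six-point type, and of all labels. This is the same check done in Section~\ref{sec:twelve-models}. Every code subset of size at most $k$ then has its Gram type in the list, so every union inequality is covered.
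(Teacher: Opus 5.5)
Your proposal is correct and follows essentially the same route as the paper. The $LDL^T$ factorization gives the positive definiteness margin, Lemma~\ref{lem:compression-margin} lifts the compressed degree-zero blocks, completeness of the Gram-type list together with relabelling invariance of the union sum covers every inequality, and Proposition~\ref{prop:certificate} with integrality gives the bound; the paper also notes that the singleton inequality forces $\alpha\ge1$, so the constraint $\alpha\ge0$ holds strictly as well.
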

\begin{proof}
The factorization proves $H_b-\varepsilon W_b\succ0$. Since
$W_b\succ0$, all coefficient blocks are positive definite.
Lemma~\ref{lem:compression-margin} supplies full-label lifts with the
same residuals and a positive definiteness margin. The residual tests
cover every required inequality because the Gram-type list is complete
and the union sum is invariant under relabelling.
Proposition~\ref{prop:certificate} gives $|C|\le\alpha$, and integrality
gives the stated bound. The singleton inequality forces $\alpha\ge1$.
\end{proof}

The verifier performs the shifted-matrix tests by exact rational
$LDL^T$ factorization; positivity of every pivot certifies positive
definiteness. All arithmetic operations and sign comparisons are exact.
All sixty entries in Table~\ref{tab:computed} pass the test with
\begin{equation}\label{eq:margin}
H_b-10^{-30}W_b\succ0,\qquad r_S<-10^{-26}.
\end{equation}
The two additional degree-two certificates satisfy the same margins.

For a candidate stored in compressed form, the verifier also lifts each
nonempty degree-zero block back to the full label space. Taking
$\varepsilon=10^{-30}$ and
$P_R^\perp=I-C_RD_R^{-1}C_R^T$ as in~\eqref{eq:compression-projector},
it forms
\[
M_{R,0}=J_R\widehat M_{R,0}J_R^T+2\varepsilon P_R^\perp,
\]
checks $M_{R,0}-\varepsilon I\succ0$, and reevaluates every original
inequality. The compressed and lifted residuals must agree exactly.
Thus the margins in~\eqref{eq:margin} are proved inequalities, not
numerical tolerances.

The acceptance tests use arbitrary-size Python integers and
\texttt{fractions.Fraction}. The reported certificates were checked
with Python~3.12.14; the distributed programs support Python~3.9 and
later. No floating-point arithmetic enters these feasibility tests.
Every finite decimal imported from the solver is interpreted as the
exact rational number specified by its printed mantissa and exponent.
The integer bounds and the decimal comparisons in
Section~\ref{sec:application} are likewise checked by exact rational
arithmetic.

\subsection{Building the exact SDP}\label{sec:build-exact-sdp}
Starting from $(n,\Lambda,k,d)$, the program constructs the rational SDP
of Section~\ref{sec:finite-sdp}. It enumerates symmetric Gram patterns
with diagonal $1$ and off-diagonal entries in $\Lambda$, groups them by
simultaneous row and column permutations, and tests realizability by
exact PSD and rank computations. For the twelve problems in
Section~\ref{sec:twelve-models}, the numbers of Gram types at orders
$0,1,2,3,4,5,6$ are $1,1,2,4,11,34,156$; the order-six list covers all
$2^{15}$ labelled patterns and contains the singular rank-five type
described there.

The program then enumerates the feasible labels and reference
permutations. Formula~\eqref{eq:homogeneous-recurrence} evaluates the
Gegenbauer factors using rational Gram inverses. The union condition
in~\eqref{eq:B} is applied to each reference subset and ordered selected
pair. Reference averaging is built into the fixed coefficients, while
the decision matrices remain unrestricted PSD variables.

For a nonempty reference set $R$, let $B_{S,R,0}$ be the degree-zero
coefficient matrix before compression, indexed by all labels in $U_R$.
The corresponding compressed coefficient matrix is $A_{S,R,0}$ from
Section~\ref{sec:finite-sdp}. Using $C_R,J_R$ from
\eqref{eq:compression-maps}, the builder and verifier check
\begin{equation}\label{eq:coefficient-factorization}
A_{S,R,0}=J_R^TB_{S,R,0}J_R,\qquad
B_{S,R,0}=C_RA_{S,R,0}C_R^T.
\end{equation}
Consequently,
$\langle B_{S,R,0},M_{R,0}\rangle
=\langle A_{S,R,0},C_R^TM_{R,0}C_R\rangle$.
These are the coefficients used in Section~\ref{sec:finite-sdp}.

At $k=6,d=5$, the exact model has $19$ reference types, $114$
coefficient blocks and $208$ union inequalities. The largest degree-zero
block has order $17$, and the corresponding positive-degree blocks have
order $16$. One scalar block for $\alpha$ and one nonnegative slack for
each inequality give $323$ SDPA blocks. These counts are checked against
the generated input.

\begin{figure}[ht]
\centering
\begin{tabular}{c}
\fbox{\parbox{0.88\textwidth}{\centering
\textbf{Build the exact SDP}\par
Enumerate all realizable Gram types and labels;\par
assemble, average and compress the exact coefficients.}}\\[2pt]
$\downarrow$\\[2pt]
\fbox{\parbox{0.88\textwidth}{\centering
\textbf{Find a candidate}\par
Solve with high-precision SDPA-GMP;\par
read each saved decimal as an exact rational number.}}\\[2pt]
$\downarrow$\\[2pt]
\fbox{\parbox{0.88\textwidth}{\centering
\textbf{Check the certificate exactly}\par
Prove the matrix margins and every original inequality;\par
compare the compressed and lifted residuals.}}\\[2pt]
$\downarrow$\\[2pt]
\fbox{\parbox{0.88\textwidth}{\centering
\textbf{Conclude the bound}\par
All required checks pass $\Longrightarrow |C|\le\lfloor\alpha\rfloor$.}}\\
\end{tabular}
\caption{The numerical solver is used only to find a candidate; the
final proof is an exact rational verification of the original SDP.}
\label{fig:workflow}
\end{figure}

\subsection{Finding a candidate}
Following de Laat et al.~\cite[Section~6.2, p.~554]{dL}, we use
high-precision SDPA-GMP to find candidates and then verify the original
rational inequalities. We use the \path{sdpa-gmp-7.1.3.tar.gz}
distribution supplied by de Laat, built with GMP~6.3.0; see~\cite{SDPA}.
The requested precision is $1500$ bits, rounded up to $1536$ bits by the
recorded 64-bit GMP build. Both \texttt{epsilonStar} and
\texttt{epsilonDash} are $10^{-16}$, and the iteration limit is $2000$.
These parameters guide the search; they are not certificate acceptance
conditions.

For the direct compressed six-point search, put $\varepsilon=10^{-30}$.
The generator writes the unchanged blocks as $M=X+2\varepsilon I$ and
the nonempty degree-zero blocks as
$\widehat M=X+2\varepsilon D_R$, with $X\succeq0$. It adjusts the
constant terms and requests residuals at most $-10^{-26}$. These shifts
help the solver find candidates that pass the strict tests.

For the twelve six-point runs, the exact coefficients are saved before
conversion to the SDPA input format. The input uses $600$ significant
decimal digits per nonzero coefficient. Off-diagonal scalar
coefficients are halved to match SDPA's matrix-inner-product convention.
A separate check reads the input file, verifies its objective,
right-hand sides, block sizes, shifts and trace factors, and computes
every input rounding error as an exact rational number.

In these six-point runs, the candidate matrices and $\alpha$ are read
from \texttt{yMat}, whose
objective under this export convention is $-\alpha$. The parser checks
dimensions and exact symmetry, converts the printed finite decimals to
rational numbers, and adds the recorded shifts exactly. A second parser
compares every saved entry, including $\alpha$ and the auxiliary slacks,
with \texttt{yMat} plus those shifts; any mismatch is rejected.

No error bound for the internal iterations of SDPA-GMP enters the final
proof. Input rounding, solver error or output truncation may cause a
candidate to fail the exact tests, but none is accepted as a small
violation of the mathematical constraints.

\subsection{Independent checks}
For every six-point case, a separate calculation enumerates reference
subsets and orderings and recomputes the degree-zero coefficients from
the three union cases in~\eqref{eq:compression-cases}. It compares the
original and compressed coefficients, confirms that positive-degree
coefficients are unchanged, and checks coverage of the labelled Gram
patterns.

The main model builder and verifier share some code for reconstructing
coefficients. To reduce this dependence, an additional Python checker
reconstructed every coefficient at degrees $0,\ldots,5$ for all twelve
six-point cases without importing either program. It used the finite
Gegenbauer sum~\cite[Eq.~18.5.10]{DLMF}, enumerated all reference
orderings with the required Gram matrix, and compared every scalar
coefficient with the saved original model. It tested Gram feasibility by
all principal minors and shifted decision matrices by positive leading
principal minors, using integer arithmetic. It then reconstructed the
full-label matrices and reevaluated every original residual. The same
strict margins and integer bounds were obtained.

For each degree-five LP, rational primal and dual feasible solutions
have exactly equal objective values. Python checks their feasibility,
objective equality and complementary slackness, thereby proving the LP
optima. Their exact integer parts give the LP column of
Table~\ref{tab:computed}. We retain the inputs, rational candidates,
solver outputs, verification reports and file hashes identifying the
data and programs used for these checks.

\section*{Acknowledgments}
We thank Zilin Jiang for suggesting the use of semidefinite programming
to investigate existence questions for strongly regular graphs, a
direction that provided an important starting point for this work. We
are also grateful to him for several valuable discussions and for many
insights that helped shape the development of the project.

We acknowledge the coding assistance of OpenAI's GPT-6 in implementing
and debugging the programs, which substantially reduced the time
required for this work. The authors take responsibility for the
mathematical arguments, computations, and conclusions.

\appendix
\section{Programs and certificates}\label{app:reproduce}
The Python programs and the four six-point certificates proving
Theorem~\ref{thm:main} are available at
\begin{center}
\url{https://github.com/Wideep886/srg_sdp}.
\end{center}
This paper refers to
\href{https://github.com/Wideep886/srg_sdp/tree/de824a2bd72f45843df2f226a626b02a676e9140}{revision \texttt{de824a2}}.
The repository contains the exact models, rational candidates, solver
outputs, and verification programs for these four certificates, together
with instructions in English and Traditional Chinese. They give the
integer bounds $348$, $506$, $592$, and $1307$ reported in
Section~\ref{sec:application}.

Checking a saved certificate requires only Python~3.9 or later and its
standard library; no SDP solver is needed. The verifier recomputes the
matrix and residual tests, the degree-zero compression identities, and
the correspondence with the solver input and output. A saved success
report is not used in place of these checks. For example, from the
repository directory, the command
\begin{verbatim}
python3 code/srg.py verify --certificate 550_162_75_36
\end{verbatim}
checks the certificate giving the bound $506$.

The same entry point, \path{code/srg.py}, supports parameter inspection,
model construction, and new numerical searches. It accepts SRG parameters
whose selected spherical embedding is injective and has rational inner
products, with $2\le k\le\min(6,n)$ and nonnegative degree $d$.
For inputs with linearly dependent reference configurations, the
corresponding functions are fixed to zero; this restriction may weaken
the bound. All twelve applications in this paper have independent
reference configurations. New searches use SDPA-GMP. Its source distribution and the GMP source distribution are
included with build instructions and the original software notices.
The README files explain the supported inputs and commands.

The distributed certificates cover the four nonexistence results.
The larger experimental archive underlying Table~\ref{tab:computed},
the supplementary degree-two bounds, and the introductory coverage
count is retained by the authors and is outside this release.
\section{Recovering coefficient tables from polynomial bases}\label{app:degree}
Section~\ref{sec:framework} replaces each PSD coefficient polynomial by
its values on the labels used by the code constraints. This appendix
relates that coefficient-value description to a prescribed polynomial
basis. There are two questions. First, when does a chosen basis recover
every PSD coefficient table? Second, how large a monomial degree is
sufficient in the formulation of de Laat et al.? The answers also make
clear that coefficient-degree interpolation is different from lowering
the Gegenbauer degree bound $d$.

Let $W\subset\mathbb R^m$ be a finite set of distinct labels. For a
column $z(u)$ of polynomials, evaluation on $W$ gives a matrix $E$ whose
rows are $z(w)^T$. A PSD coefficient form $z(u)^THz(v)$ therefore gives
the table $EHE^T$. The following lemma characterizes when this produces
all PSD tables.

\subsection{When a polynomial basis captures all coefficient tables}

\begin{lemma}\label{lem:framework-features}
Let $z$ be any finite column vector of polynomials, and let $E$ have
rows $z(w)^T$, indexed by $w\in W$. Every PSD matrix on $W$ can be
written as $EHE^T$ with $H\succeq0$ if and only if $E$ has full row
rank.
\end{lemma}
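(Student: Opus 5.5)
The plan is to prove both directions separately. Write $N=|W|$ and let $E$ be the $N\times r$ evaluation matrix, where $r$ is the length of $z$. Every table of the form $EHE^T$ has range contained in the column space of $E$. So the question is whether all PSD tables fit inside that subspace.

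For sufficiency, assume $E$ has full row rank $N$. Then $EE^T$ is invertible, and $E^+=E^T(EE^T)^{-1}$ is a right inverse: $EE^+=I_N$. Given any PSD table $M$ on $W$, I would set
\[
H=E^+M(E^+)^T.
\]
This $H$ is PSD because it is a congruence of $M$. It also gives $EHE^T=(EE^+)M(EE^+)^T=M$. This is the same congruence idea as in the lifting step of Proposition~\ref{prop:compression}, where $C_R^TJ_R=I$ recovered the compressed matrix.

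For necessity, suppose every PSD matrix on $W$ has the form $EHE^T$. Apply this to $M=I_N$, which is PSD. We get $I_N=EHE^T$, so
\[
N=\operatorname{rank} I_N\le\operatorname{rank}E\le N,
\]
and $E$ has full row rank. An alternative argument takes a nonzero $c$ with $c^TE=0$, if one exists, and then the table $cc^T$ cannot be represented, since $c^T(EHE^T)c=0\neq\|c\|^4$. Either choice of test matrix finishes the proof.

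There is no serious obstacle here; the statement is elementary linear algebra. The one point to be careful about is that $H$ must be symmetric PSD and not merely some matrix satisfying $EHE^T=M$. Using the right inverse $E^+$ together with the congruence form handles this automatically. I would also mention the degenerate case $W=\varnothing$, where both conditions hold trivially, to match the empty-table convention used after Lemma~\ref{lem:framework-evaluation}.
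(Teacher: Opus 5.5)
Your proof is correct and follows essentially the same route as the paper. For sufficiency, both use a right inverse $C$ with $EC=I$ and the congruence $H=CMC^T$; the paper builds $C$ from a nonsingular square submatrix instead of your $E^T(EE^T)^{-1}$, which makes no difference. For necessity, your rank argument with $I_N$ and your $cc^T$ variant are equivalent to the paper's argument, which takes $c\ne0$ with $E^Tc=0$ and observes that the identity cannot be represented.
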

\begin{proof}
If $E$ has full row rank, choose a matrix $C$ with $EC=I$; for example,
invert a square nonsingular submatrix and set the other rows of $C$ to
zero. Given $M\succeq0$, take $H=CMC^T\succeq0$. Then $EHE^T=M$.
Conversely, if the rows are dependent, choose $c\ne0$ with $E^Tc=0$.
Every matrix $EHE^T$ has zero quadratic form at $c$, whereas $I$ does
not. Thus not every PSD matrix can be represented.
\end{proof}

\subsection{A sufficient monomial degree}
We now specialize to the monomial basis used for the comparison with
de Laat et al. Let $z_q$ contain all monomials of total degree at most
$q$. If the components of $z$ have degree at most $q$, then
$z(u)^THz(v)$ has degree at most $q$ in each of $u,v$ separately and
joint total degree at most $2q$. In de Laat's degree parameterization,
the coefficient of the $l$th Gegenbauer term uses degree at most
$\delta-l$. As before, $m=|R|$, $s=|\Lambda|$ and $G=G_R$. The next
proposition gives a sufficient value of this coefficient degree.

\begin{proposition}\label{prop:framework-degree}
For $m>0$, indicators on $U_R$ can be chosen with total degree at most
$q_0=\max\{m(s-1),s\}$, and indicators on $U_R^{\mathrm{gen}}$ with
total degree at most $q_+=m(s-1)$. For $m=0$, take $q_0=q_+=0$.
Consequently, a model using all monomials of total degree at
most $\delta-l$ for the coefficient of $P_l^{n,m}$ gives exactly the finite function values
in Theorem~\ref{thm:framework-reduction}, using the same degrees $0,\ldots,d$,
whenever $\delta\ge\max\{q_0,d+q_+\}$.
\end{proposition}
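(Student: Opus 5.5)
The plan is to build the indicator polynomials explicitly from one-variable Lagrange factors on the finite alphabet, and then deduce the model statement from Lemma~\ref{lem:framework-features}.

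\textbf{Generic labels.} For $U_R^{\mathrm{gen}}\subseteq\Lambda^m$, take for each $\lambda\in\Lambda$ the univariate Lagrange polynomial
\[
\ell_\lambda(x)=\prod_{\mu\in\Lambda\setminus\{\lambda\}}\frac{x-\mu}{\lambda-\mu},
\]
which has degree $s-1$. For $w\in\Lambda^m$ put $\phi_w(u)=\prod_{i=1}^m\ell_{w_i}(u_i)$. This has total degree $m(s-1)$, equals $1$ at $w$, and vanishes at every other point of $\Lambda^m$. Hence it is an indicator on $U_R^{\mathrm{gen}}$, giving $q_+=m(s-1)$.

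\textbf{All of $U_R$.} Here we must also separate the reference labels $\rho_j=Ge_j$, whose $j$th coordinate is $1$ and whose other coordinates are entries of $G$ in $\Lambda$.
\begin{itemize}
\item For a generic $w$, use $\phi_w(u)\,\bigl(\prod_{i}(1-u_i)\bigr)$. This is too costly, so instead I would note that $\phi_w(\rho_j)$ contains the factor $\ell_{w_j}(1)$, which is generally nonzero. A cheaper fix is needed.
\item Use the degree-one function $\sum_i u_i$ or $u_j$ to distinguish points. A cleaner route is to take $\psi(u)=\prod_{\lambda\in\Lambda}(u_j-\lambda)$ for the $j$th coordinate: it has degree $s$, vanishes on all of $\Lambda^m$, and equals $\prod_\lambda(1-\lambda)\neq0$ at $\rho_j$.
\item At $\rho_i$ with $i\neq j$, its $j$th coordinate lies in $\Lambda$, so $\psi(\rho_i)=0$. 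Thus $\psi/\psi(\rho_j)$ is an indicator of $\rho_j$ on $U_R$ of degree $s$.
\item For generic $w$, take $\phi_w-\sum_j\phi_w(\rho_j)\,\chi_{\rho_j}$, where $\chi_{\rho_j}$ is the reference indicator just built. This has degree at most $\max\{m(s-1),s\}=q_0$, is $1$ at $w$, and is $0$ at every other label.
\end{itemize}

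\textbf{Consequence for the model.} For $l=0$ the monomial vector $z_{\delta}$ spans all indicators on $U_R$ once $\delta\ge q_0$. For $1\le l\le d$, the vector $z_{\delta-l}$ spans the indicators on $U_R^{\mathrm{gen}}$ once $\delta-l\ge q_+$; this holds for all $l\le d$ when $\delta\ge d+q_+$. Spanning the indicators means the evaluation matrix $E$ contains the identity in its row space, so $E$ has full row rank. Lemma~\ref{lem:framework-features} then shows every PSD table is $EHE^T$.

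Combined with Theorem~\ref{thm:framework-reduction}, this shows that the monomial model realizes exactly the finite data $(M_{R,l})$. The positive-degree reference rows are irrelevant there, because the corresponding Gegenbauer factors vanish. The reverse containment comes from the comparison paragraph. The invertible substitution $u=B\xi$ does not change degrees, so the monomials in $\xi$ and in $u$ span the same space. The case $m=0$ is trivial, since constants suffice.

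The main care point is the reference-label indicator: verifying that $\rho_i$ with $i\ne j$ has $j$th coordinate in $\Lambda$ rather than $1$, which holds because distinct reference points have inner product in $\Lambda$.
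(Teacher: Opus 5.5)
Your proof is correct and matches the paper's argument: the same product-of-Lagrange-factors indicators on $\Lambda^m$, the same degree-$s$ reference indicators $\prod_{\lambda\in\Lambda}(u_j-\lambda)/(1-\lambda)$, the same correction $\phi_w-\sum_j\phi_w(\rho_j)\chi_{\rho_j}$, and the same appeal to Lemma~\ref{lem:framework-features} together with the invariance of bounded-degree polynomial spaces under $u=B\xi$. Two cosmetic fixes: delete the two abandoned attempts in your first bullets, and say that the indicator coefficient vectors give $EC=I$, so the column space of $E$ is all of $\R^{|W|}$, rather than that the identity lies in its row space.
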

\begin{proof}
For $w\in\Lambda^m$, define the grid indicator
\[
 p_w(u)=\prod_{j=1}^m\prod_{a\in\Lambda\setminus\{w_j\}}
                  \frac{u_j-a}{w_j-a}.
\]
Its degree is at most $m(s-1)$ and $p_w(v)=\delta_{wv}$ on
$\Lambda^m$, where $\delta_{wv}$ is $1$ when $w=v$ and $0$ otherwise.
For the $i$th reference label, put
\[
 q_i(u)=\prod_{a\in\Lambda}\frac{u_i-a}{1-a}.
\]
This polynomial has degree $s$, vanishes on $\Lambda^m$, and satisfies
$q_i(Ge_j)=\delta_{ij}$. The last identity holds because $(Ge_j)_i=1$
when $i=j$, and otherwise belongs to $\Lambda$.
Therefore
\[
 \phi_w(u)=p_w(u)-\sum_{i=1}^m p_w(Ge_i)q_i(u)
\]
still has the required generic-grid values and vanishes at every
reference label. The $\phi_w$ for feasible generic labels, together
with the $q_i$, are indicators on $U_R$ with degree at most $q_0$.
For positive degrees the unmodified $p_w$ already interpolate
$U_R^{\mathrm{gen}}$. Restricting to a feasible subset changes none of
these identities. The stated condition on $\delta$ provides these
polynomials for every coefficient, so Lemma~\ref{lem:framework-features}
gives the claimed equality of finite function values. Recall that
$G=BB^T$, with $B$ lower triangular and positive diagonal.
Changing from $u$ to $B^{-1}u$ is invertible and linear, and hence preserves each
space of polynomials of bounded total degree.
\end{proof}

\paragraph{The two-distance applications.}
For two inner products and references of size $m\le4$, the displayed
bounds give $q_0,q_+\le4$. Thus $\delta=9$ suffices for a monomial
representation of all coefficient tables used by our $d=5$ model,
provided the same Gegenbauer degrees $0,\ldots,5$ are retained. This
explains concretely how a sufficiently rich polynomial-basis model
recovers the coefficient-value reduction of Theorem~\ref{thm:framework-reduction}.
A monomial model that also includes degrees $6,\ldots,9$ is a different
search space. The subsequent degree-zero coordinate reduction preserves
the assembled dual inequalities, not every individual degree-zero
function value.

\subsection{Why the Gegenbauer degree bound is different}
The preceding interpolation concerns the coefficient polynomials in
$u,v$ only. It does not imply that higher Gegenbauer degrees can be
removed. The following example records this distinction.
\begin{remark}\label{rem:scalar-degree}
Take $n=3$, $R=\varnothing$ and $\Lambda=\{-1/2,1/2\}$. The PSD function
\[
 P_3^3(t)=\frac{5t^3-3t}{2}
\]
has values $P_3^3(1/2)=-7/16$ and $P_3^3(-1/2)=7/16$.
Any PSD polynomial function $F(t)$ of degree at most two has the form
$c_0P_0^3(t)+c_1P_1^3(t)+c_2P_2^3(t)$ with $c_i\ge0$.
Since $P_0^3,P_2^3$ are even and $P_1^3(t)=t$, it satisfies
\[
 F(1/2)-F(-1/2)=c_1\ge0,
\]
whereas the difference for $P_3^3$ is $-7/8$.
Thus no such PSD function agrees with $P_3^3$ at the allowed inner products,
although ordinary polynomial interpolation on $\Lambda\cup\{1\}$
uses degree at most two.
\end{remark}

\section{Continuity of the assembled dual function}\label{app:assembly-continuity}
We verify directly the joint continuity asserted after
\eqref{eq:local-assembly}. Recall that $V=S^{n-1}$ and that $I_{k-2}$
consists of spherical $\Lambda$-codes with at most $k-2$ points,
including the empty set. Suppose
$(x_j,y_j,Q_j)\to(x,y,Q)$ in $V^2\times I_{k-2}$.
The reference sizes are eventually equal, and their points can be
ordered to converge coordinatewise. Their Gram entries lie in the
finite set $\Lambda\cup\{1\}$, so eventually $Q_j$ has the same Gram
type as $Q$. Write $R$ for their common representative. The empty
reference causes no exception, since it is isolated.

For each $j$, let $\gamma_j=\gamma_{Q_j}$ be the chosen orthogonal
map carrying $Q_j$ to $R$. Every subsequence has a further subsequence
along which the orthogonal matrices $\gamma_j$ converge to an orthogonal
matrix $\gamma$: their entries are bounded, and the identity
$\gamma_j^T\gamma_j=I$ persists in the limit.
Since $\gamma_jQ_j=R$, coordinatewise convergence of the reference
points gives $\gamma Q=R$. Continuity of $K_R$ now gives
\[
 K_R(\gamma_jx_j,\gamma_jy_j)\longrightarrow K_R(\gamma x,\gamma y).
\]
The maps $\gamma$ and the chosen $\gamma_Q$ both carry $Q$ to $R$,
so the composition $\gamma\gamma_Q^{-1}$ preserves $R$. The symmetry
of $K_R$ therefore makes the limit equal to
$K_R(\gamma_Qx,\gamma_Qy)=T(x,y,Q)$.
If the original sequence of function values did not converge to
this value, some subsequence would stay a fixed positive distance
from it. The convergent subsequence just constructed would contradict
that property. Hence $T$ is jointly continuous, without a continuous
choice of the maps $\gamma_Q$.


\begin{thebibliography}{dLCdOV22}
\raggedright
\bibitem[BvM22]{BVM}
A.~E. Brouwer and H. Van Maldeghem,
\emph{Strongly Regular Graphs}, Encyclopedia of Mathematics and its
Applications 182, Cambridge University Press, 2022.


\bibitem[B26]{BrouwerTable}
A.~E. Brouwer, \emph{Parameters of Strongly Regular Graphs}, online tables.
\url{https://aeb.win.tue.nl/graphs/srg/srgtab.html}.
Accessed 28 September 2026. The authors retained the table snapshot
used to count the exclusions from the classical conditions.

\bibitem[DGS77]{DGS}
P. Delsarte, J.~M. Goethals, and J.~J. Seidel,
Spherical codes and designs,
\emph{Geometriae Dedicata} \textbf{6} (1977), 363--388.
\href{https://doi.org/10.1007/BF03187604}{doi:10.1007/BF03187604}.

\bibitem[M09]{MusinTwo}
O.~R. Musin, Spherical two-distance sets,
\emph{Journal of Combinatorial Theory, Series A} \textbf{116} (2009), 988--995.
\href{https://doi.org/10.1016/j.jcta.2008.09.003}{doi:10.1016/j.jcta.2008.09.003}.

\bibitem[KG22]{KG}
J.~H. Koolen and B. Gebremichel,
There does not exist a strongly regular graph with parameters
$(1911,270,105,27)$,
\emph{Electronic Journal of Combinatorics} \textbf{29}(2) (2022), Paper P2.3.
\href{https://doi.org/10.37236/10675}{doi:10.37236/10675}.

\bibitem[dLCdOV22]{dL}
D. de Laat, F. Caluza Machado, F.~M. de Oliveira Filho, and F. Vallentin,
$k$-Point semidefinite programming bounds for equiangular lines,
\emph{Mathematical Programming} \textbf{194} (2022), 533--567.
\href{https://doi.org/10.1007/s10107-021-01638-x}{doi:10.1007/s10107-021-01638-x}.
Earlier version: \href{https://arxiv.org/abs/1812.06045v2}{arXiv:1812.06045v2}, 2019.
Numbered references use the published version unless stated otherwise.

\bibitem[M14]{Musin}
O.~R. Musin, Multivariate positive definite functions on spheres,
in \emph{Discrete Geometry and Algebraic Combinatorics},
Contemporary Mathematics \textbf{625} (2014), 177--190.
\href{https://arxiv.org/abs/math/0701083v2}{arXiv:math/0701083v2}, 2008.
Definition and theorem numbers refer to this preprint version.

\bibitem[KY22]{KY}
W.-J. Kao and W.-H. Yu,
Four-point semidefinite bound for equiangular lines,
\href{https://arxiv.org/abs/2203.05828v1}{arXiv:2203.05828v1}, 2022.

\bibitem[N10]{SDPA}
M. Nakata, A numerical evaluation of highly accurate multiple-precision
arithmetic version of semidefinite programming solver: SDPA-GMP, -QD
and -DD, in
\emph{2010 IEEE International Symposium on Computer-Aided Control System
Design}, 29--34.
\href{https://doi.org/10.1109/CACSD.2010.5612693}{doi:10.1109/CACSD.2010.5612693}.

\bibitem[DLMF]{DLMF}
National Institute of Standards and Technology,
\emph{Digital Library of Mathematical Functions},
Section~18.5, Eq.~18.5.10.
\url{https://dlmf.nist.gov/18.5.E10}.

\end{thebibliography}
\end{document}